\newtheorem{thm}{Theorem}[section]
\newtheorem{cor}[thm]{Corollary}
\newtheorem{lem}[thm]{Lemma}
\newtheorem{prop}[thm]{Proposition}
\theoremstyle{definition}
\newtheorem{defn}[thm]{Definition}
\theoremstyle{remark}
\newtheorem{rem}[thm]{Remark}
\numberwithin{equation}{section}
\newcommand{\ed}{\end {document}}
\title[Hedgehog solutions]{Global wellposedness of hedgehog solutions
for the $(3+1)$ Skyrme model}
\author{Dong Li}
\email{madli@ust.hk}
\address{Department of Mathematics, The Hong Kong
University of Science \& Technology, Clear Water Bay, Hong Kong}
\subjclass[2000]{35Q55}
\begin{document}
\maketitle
\begin{abstract}
We consider the  hedgehog solutions in the
 $(3+1)$-dimensional Skyrme model which is an energy-supercritical problem.
We introduce a new strategy to prove global wellposedness
for arbitrarily large initial data.
\end{abstract}

\section{Introduction}
In this paper we consider the $(3+1)$-dimensional Skyrme model in quantum field theory.
This nonlinear sigma model was first proposed by Skyrme \cite{Sk61a, Sk61b, Sk62} to incorporate baryons
as stable field configurations in the description of low energy interaction of pions.
Let $U:\; \mathbb R^{3+1} \to SU(2)$ be a map into the isospin group with signature
$(+---)$. Define the $su(2)$-valued connection one-form $A$ by (below $U^{\dagger}$ denotes
the Hermitian adjoint)
\begin{align*}
A= U^\dagger  d U = A_\mu d x^\mu,
\end{align*}
where $x^0=t$, $(x^j)_{1\le j \le 3}= x \in \mathbb R^3$. The Lagrangian density of the classical
Skyrme model is given by
\begin{align}
\mathcal L= -\frac 1 4 f^2_{\pi} \text{Tr}(A_\mu A^\mu)
+ \frac 1 4 \epsilon^2 \text{Tr}\Bigl( [A_\mu, A_\nu] [A^\mu, A^\nu] \Bigr), \label{je1}
\end{align}
where $f_{\pi}^2$ is the pion decay constant, and $\epsilon>0$ is a  coupling parameter.
The actual value of $f_{\pi}^2$ does not play much role in our mathematical analysis and we will
conveniently set it to be $2$.
Here $[\cdot,\cdot]$ is the usual Lie bracket on $su(2)$ and $\text{Tr}(\cdot)$ denotes the matrix trace.

The Euler-Lagrangian equation of \eqref{je1} takes the form
\begin{align}
\partial_\mu \Bigl( A^\mu - \epsilon^2 [A_\nu, [A^\mu, A^\nu]] \Bigr)=0. \label{je2}
\end{align}

Let $I_2$ be the identity matrix and $\sigma_j$, $1\le j\le 3$ be the Pauli spin matrices.
Introducing the angular variable $\omega=\omega(t,x)$ and the spin vector $\mathbf n = (n_j) \in \mathbb S^2$,
we write the group element $U\in SU(2)$ as
\begin{align}
U(t,x) & = \exp \left( \frac{\omega(t,x)} {2i} \sigma_j n_j (t,x) \right) \notag \\
& = I_2 \cos \frac{\omega(t,x)} 2 - i \Bigl( \sigma_j n_j(t,x) \Bigr) \sin
\frac{\omega(t,x)} 2. \label{je3}
\end{align}

We shall be mainly concerned with a special family of solutions known as hedgehog solutions.
Under the hedgehog ansatz, we set $r=|x|$, $n_j(x) = \frac{x_j} r$ and $\omega(t,x)= 2f(r,t)$,
where $f$ is the unknown radial function. We then obtain from \eqref{je2}--\eqref{je3},
\begin{align}
  & \left(1+ \epsilon^2 \frac{2\sin^2 f} {r^2} \right) (\partial_{tt} -\partial_{rr}-\frac 2 r \partial_r )f \notag\\
  = & - \epsilon^2 \frac{4\sin^2 f}{r^3} \partial_r f - \epsilon^2 \frac{\sin(2f)} {r^2} \Bigl( (\partial_t f)^2 -(\partial_r f)^2
  \Bigr) \notag\\
  & \qquad - \frac{\sin(2f)} {r^2} - \epsilon^2 \frac{\sin^2 f \cdot \sin(2f)} {r^4}. \label{je4}
  \end{align}

Introduce the notations
\begin{align*}
\Delta_d = \partial_{rr} + \frac {d-1} r \partial_r
\end{align*}
and
\begin{align*}
\square_d = \partial_{tt} -\Delta_d = \partial_{tt} - \partial_{rr} - \frac{d-1} r \partial_r.
\end{align*}
For radial functions on $\mathbb R^d$, $\Delta_d$ and $\square_d$ are simply the usual Laplacian
and D'Alembertian in polar coordinates. In our work, it will be useful to lift the function $f(r)$
to a radial function in  $\mathbb R^d$ for some convenient choices of the dimension $d$.

Using the above notation, we write \eqref{je4} compactly as
\begin{align}
 \left(1+ \epsilon^2 \frac{2\sin^2 f} {r^2} \right) \square_3 f
  = & -\epsilon^2 \frac{4\sin^2 f}{r^3} \partial_r f -  \epsilon^2 \frac{\sin(2f)} {r^2} \Bigl( (\partial_t f)^2 -(\partial_r f)^2
  \Bigr) \notag\\
  & \qquad - \frac{\sin(2f)} {r^2} - \epsilon^2 \frac{\sin^2 f \cdot \sin(2f)} {r^4}. \label{je5}
\end{align}

The boundary conditions for $f$ are
\begin{align}
\lim_{r\to 0} f(t,r) = N_1\pi, \quad \lim_{r\to \infty} f(t,r)=0, \label{ge2}
\end{align}
where $N_1\ge 0$ is an integer.

The main result of this paper, roughly speaking, is that for smooth and arbitrarily large initial data the
corresponding solution to \eqref{je5}--\eqref{ge2} exists globally in time. The precise formulation of the
results will be given in Section \ref{sec_refor}. The basic conservation law associated with \eqref{je5}
is given by the Skyrme energy
\begin{align}
E(t) &= \frac 12 \int_{0}^\infty (1+ \epsilon^2 \frac{2\sin^2 f} {r^2} ) \Bigl( (\partial_t f)^2 +
(\partial_r f)^2 \Bigr) r^2 dr \notag \\
&\qquad+ \int_{0}^\infty \frac {\sin^2 f} {r^2}
(1+ \epsilon^2 \frac{\sin^2 f} {2r^2} ) r^2 dr \label{ge2a}\\
&=E_0, \qquad\forall\, t>0. \notag
\end{align}

 With respect to the Skyrme energy conservation, the main difficulty associated with the analysis of
\eqref{je5} is that it is \emph{energy-supercritical} and no useful
theory is readily available for such problems. We shall introduce a
new (and special) strategy to overcome this difficulty and prove
global wellposedness for arbitrarily large initial data. As far as
we know, this is the first unconditional result on a \emph{physical}
{energy-supercritical} problem.

We summarize below the main points of the proof.

\medskip
\noindent\textbf{Main steps of the proof}

\medskip
\noindent
In our analysis the value of $\epsilon$ does not play much role and we will henceforth set $\epsilon=1$
in \eqref{je5} for convenience.

\medskip
\noindent
Step $1$. Local (in time) analysis and lifting to dimension $5$.

The first step is to get a good local theory.
Observe that the nonlinearity on the RHS of \eqref{je5} has strong singularities near $r=0$ which can
only be balanced out by a good local asymptotics of $f$ as $r\to 0$. To kill this singularity we introduce
$g=g(r,t)$ by the relation
\begin{align}
 f(r,t) = \phi(r,t) + r g(r,t), \label{RVA1}
\end{align}
where $\phi$ is a smooth cut-off function such that $\phi(r)\equiv N_1\pi$ for $r\le 1$. We then
regard $g$ as a radial function on $\mathbb R^5$ and obtain from \eqref{je4}, \eqref{RVA1}
an equation for $g$ of the form
\begin{align*}
 \square_5 g =N(r,g,\partial_t g, \nabla g),
\end{align*}
where $N$ is a smooth nonlinearity and no longer contains any singularities near $r=0$. Local wellposedness
in  $H^k_{rad}(\mathbb R^5)$ then follows from energy estimates. From the local analysis, to continue the solution to all
time, we only need to control the quantity
\begin{align}
 G(t) = \left\| \langle x \rangle g(t,x) \right\|_{L_x^\infty(\mathbb R^5)} + \left\| \langle x \rangle
(|\partial_t g| + |\nabla g| ) \right \|_{L_x^\infty(\mathbb R^5)}. \label{RVA2}
\end{align}
We shall achieve this in several steps.

\medskip

\noindent
Step $2$. A nonlocal transformation and derivation of the $\Phi$-equation.

The blowup/continuation criteria \eqref{RVA2} is supercritical with respect to the Skyrme energy \eqref{ge2a}.
To nail down global wellposedness, we analyze in a deeper way the structure of \eqref{je5}. For this purpose, we
introduce a nonlocal transformation of the form (see Section 3 for more details)
\begin{align}
 \Phi(r,t) = \int_0^{g(r,t)} \left( 1 + \frac{2 \sin^2 (ry+\phi(r))}{r^2} \right)^{\frac12 } dy
+ \frac 1 {r^3} \phi_{\gtrsim 1}(r), \label{RVA3}
\end{align}
where $\phi_{\gtrsim 1}$ is a smooth cut-off function localized to the regime $r\gtrsim 1$.
Regard $\Phi$ as a radial function on $\mathbb R^5$.  For $\Phi$ we then obtain from \eqref{je5},
\eqref{RVA3} a nonlocal equation of the form
\begin{align}
 \square_5 \Phi= \frac 1 {r^3} \phi_{\gtrsim 1}-\frac 3 2 \Phi +
\frac 12 \int_0^{g(r,t)} \left( 3 B^{\frac 32} + B^{-\frac 12} -B^{-\frac 32} \right) dy, \label{RVA4}
\end{align}
where
\begin{align*}
 B= 1 + \frac{2 \sin^2 (ry + \phi(r))} {r^2}.
\end{align*}
The remarkable feature of this new system is that at the cost of nonlocality all derivative terms on the RHS of
\eqref{je4} have been eliminated.

\medskip

\noindent
Step $3$. Control of $H^1$-norm of $\Phi$ and a non-blowup argument.

This includes the estimates of $\|\Phi\|_{L_x^2(\mathbb R^5)}$, $\|\partial_t \Phi \|_{L_x^2 (\mathbb R^5)}$
and $\|\nabla \Phi \|_{L_x^2(\mathbb R^5)}$.
This is an important first step to beat energy supercriticality.
Due to the particular structure in \eqref{RVA3}, it is not difficult to check that the Skyrme energy \eqref{ge2a}
is insufficient to give any control of $\|\nabla \Phi \|_{L_x^2(\mathbb R^5)}$ which is a manifestation of energy supercriticality
at the lowest level.  A heuristic analysis (see the beginning of Section 4) shows that in the worst case scenario
the linear part of \eqref{RVA4} could take the form
\begin{align*}
 \square_5 \Phi =-\frac 3 2 \Phi + \frac 3 {r^2} \Phi
\end{align*}
which is a wave operator with \emph{negative} inverse square potential. Since $d=5$ and $3>\frac{(d-2)^2}4$, we
cannot use Strichartz (cf. \cite{BPST03}).  To solve this problem we resort to a nonlinear approach which exploits the fine structure of the equation.
Let $T$ be the first possible blowup time. By performing  estimates directly on \eqref{RVA3}--\eqref{RVA4}, we obtain
\begin{align}
 \int_{\mathbb R^5} \left( \frac 12 |\nabla \Phi(t)|^2
- \phi_{<r_0}(r) \cdot H(r,t) \right) dx \le C(T),
\quad \forall\, 0\le t<T, \label{RV_May11_1}
\end{align}
where $0<C(T)<\infty$ is a constant depending on $T$, $r_0<\frac 12$ is a small constant, $\phi_{<r_0}$ is a smooth
cut-off function
localized to $r\le r_0$, and
\begin{align*}
H(r,t) = \frac 32 \int_0^{g(r,t)} \left(1+\frac{2\sin^2(rw)}{r^2} \right)^{\frac 12} \cdot
\left(\int_0^w (1+\frac{2\sin^2(r y)}{r^2})^{\frac12} \cdot \frac{2 \sin^2(r y)}{r^2} dy \right) dw.
\end{align*}
By a detailed analysis on $H$, we  show that $H$ admits the sharp bound
\begin{align*}
H(r,t) \le \frac 94 \cdot \frac 12 \cdot \frac{|\Phi(r,t)|^2}{r^2}.
\end{align*}
From this and \eqref{RV_May11_1}, we get
\begin{align}
 0\le \int_{\mathbb R^5} \left( |\nabla \Phi(t)|^2 - \frac 94 \cdot \frac{|\Phi(t)|^2} {r^2} \right) dx  \le C(T),
\quad\forall\, 0\le t<T,
 \label{RVA5}
\end{align}
where the positivity of the integral follows from Hardy's inequality (see Lemma \ref{lem_Hardy})
 on $\mathbb R^5$. The estimate \eqref{RVA5} is the sharpest available and yet it
 is \emph{not coercive} enough to give control of $H^1$ norm
of $\Phi$. The main reason is that there could exist a sequence
\begin{align*}
 \| \nabla \Phi (t_n) \|_{L_x^2(\mathbb R^5)} \to +\infty, \quad
\left\| \frac{\Phi(t_n)} {r} \right\|_{L_x^2(\mathbb R^5)} \to +\infty,
\end{align*}
but
\begin{align*}
 \int_{\mathbb R^5} \left(  |\nabla \Phi(t_n)|^2 - \frac 94 \cdot \frac{|\Phi(t_n)|^2} {r^2} \right) dx \to C_1,
\quad \text{as $t_n \to T$},
\end{align*}
where $C_1\ge 0$ is a finite constant. To rule out this blowup scenario, we shall analyze in detail the special structure
of $\Phi$ and perform a delicate limiting and contradiction argument (see in particular \eqref{ge58}--\eqref{ge63} in the
proof of Proposition \ref{prop2}). The technical details are contained in the proof of Proposition \ref{prop2} and as a result
we can control the $H^1$-norm of $\Phi$.

\medskip

\noindent
Step $4$. Nonlinear energy bootstrap and higher order estimates.

In this final step we upgrade the $H^1$ estimate of $\Phi$ to $H^4$ estimates which are sufficient to give a priori
bound of the quantity $G(t)$ defined in \eqref{RVA2} (and yielding global wellposedness). The main task is to
interweave
the Sobolev estimates of $g$ and $\Phi$ back and forth a number of times using in an essential way the
structure of the nonlocal system
\eqref{RVA3}--\eqref{RVA4}. The estimates are organized in such a way that we first obtain temporal regularity
 and then use the structure of the equation to trade temporal regularity for spatial regularity.
The technical details are given in Section $5$.

\medskip
\noindent
The above four steps complete our proof of global wellposedness.
To put things into perspective, we briefly review below some results connected with the Skyrme model.

\medskip
\noindent
\textbf{Connection with other works}.

\begin{enumerate}

\item Prior to this work, progress has been slow on understanding the global dynamics of the
Skyrme model. In \cite{Wong11} Wong analyzed in detail the dominant
energy condition and the breakdown of hyberbolicity for the Skyrme
model (see also Gibbons \cite{Gibbons03},  Grutchfield and Bell
\cite{CB94}). In particular it follows that a small perturbation of
a static Skyrmion configuration yields local wellposedness. 
After our work is completed, the author learned that Geba, Nakanishi
and Rajeev \cite{GNR11} proved a small data global wellposedness and
scattering result for the Skyrme wave map for initial data in
critical Besov type space.

\item In \cite{GR10a, GR10b}, Geba and Rajeev considered a seimilinear Skyrme model introduced
by Adkins and Nappi \cite{AN84}. The equivariant solutions satisfy the following
\begin{align*}
\partial_{tt} f - \partial_{rr} f - \frac 2 r \partial_r f + \frac{\sin(2f)}{r^2}
+ \frac{(f-\sin f \cos f ) (1-\cos 2 f) } {r^4}=0
\end{align*}
and has conserved energy
\begin{align*}
E(f(t)) = \int_0^\infty \left( \frac 12 \Bigl( (\partial_t f)^2+(\partial_r f)^2 \Bigr)
+ \frac{\sin^2 f}{r^2} + \frac{(f-\sin f \cos f )^2} {2r^4} \right) r^2 dr.
\end{align*}
They proved that near the first possible blowup time, the energy does not
concentrate. But the issue of global wellposedness is still open.

\item If $\epsilon=0$ in \eqref{je5}, then we recover the equivariant wave map from $\mathbb R^{3+1}$ to
$\mathbb S^3$ which is also an \emph{energy-supercritical} problem. Generally smooth solutions will blow up in finite
time. Indeed Shatah \cite{Shatah88} constructed finite-time blowup solutions which is self-similar and
has finite energy. This was extended to other target manifolds in \cite{STZ94} and higher dimensions $d\ge 4$ in
\cite{CSTZ98}. In \cite{Bi00} Bizo\'n constructed a countable
family of spherically symmetric self-similar wave maps from the 3+1 Minkowski spacetime into the 3-sphere.
 These constructions all rely on the existence of a nontrivial harmonic map.

\item The $(2+1)$-dimensional analogue of the Skyrme model is known as baby Skyrme models.
The technique developed in this paper can also be used to prove global wellposededness of corresponding
hedgehog solutions. The details will be given in a future publication. In contrast, the $\epsilon=0$
limit of the baby Skyrme model gives rise to the $(2+1)$-dimensional \emph{energy-critical} equivariant wave map
\begin{align*}
\partial_{tt} f - \partial_{rr} f - \frac {\partial_r f}{r} + \frac{k^2 \sin(2f)} {2r^2}=0,
\end{align*}
where $k\ge 1$ is an integer giving the homotopy index. It is known that (cf. \cite{CTZ93,STZ94,Struwe03})
for smooth initial data with energy $E<E(Q)$, where $Q(r)=2\arctan(r^k)$, the corresponding solution
is global. Also by an argument of Struwe
 there is no blowup of self-similar type. The existence (and dynamics) of finite-time blowup
solutions were obtained in \cite{RS_Annals} ($k\ge 4$) and
\cite{KST_Inventione} ($k=1$)    
using different techniques and giving
different blowup rates. For results and some recent developments on
\emph{energy-critical} wave maps from $(2+1)$ Minkowski space to general target manifolds we refer to
\cite{KS97, KM97, Tataru01,
Bi01a, KM08,
ST10A, ST10B, KS09} and references therein.

\item The technique introduced in this paper has been recently generalised and extended
to many other important physical models. In \cite{Gnew1}, Geba and Grillakis have improved and
streamlined the result of this paper to Sobolev $H^s$, $s>7/2$. 
In \cite{Gnew2} and \cite{Gnew3}, Creek, and Geba-Grillakis have obtained large data global regularity for the $2+1$-dimensional equivariant Faddeev model.  We refer to the  monograph
\cite{Gnew4} for an extensive overview of more recent developments.

\end{enumerate}

\subsection*{Acknowledgements}
The author would like to thank Piotr Bizo\'n for some helpful
remarks and suggestions. The author would also like to thank
Dan-Andrei Geba, Kenji Nakanishi, Sarada G. Rajeev and Zhen Lei for
their interest in this work. 
The author is indebted to the anonymous referees for their many insightful remarks
and very helpful suggestions.
The author was supported in part by Hong Kong RGC grant GRF 16307317
and 16309518.

\section{Reformulation and main results} \label{sec_refor}
As was already mentioned, the value of $\epsilon$ will not play much role in our analysis as long as $\epsilon>0$.
In the rest of this paper we shall set $\epsilon=1$ in \eqref{je5}.

Denote
\begin{align*}
A_1 = 1+ \frac{2 \sin^2 f} {r^2}.
\end{align*}
Then
\begin{align}
\square_3 f & = -\frac 1 {A_1} \cdot \frac{4\sin^2 f} {r^3} \cdot \partial_r f
- \frac 1 {A_1} \cdot \frac{\sin(2f)}{r^2} \cdot \Bigl( (\partial_t f)^2 - (\partial_r f)^2 \Bigr) \notag \\
& \qquad -\frac 1 {A_1} \cdot \frac{\sin(2f)} {r^2} - \frac 1 {A_1} \cdot
\frac{\sin^2 f \cdot \sin(2f)} {r^4} \notag \\
& =: N(r, f, f^\prime), \label{ge1}
\end{align}

with boundary condition \eqref{ge2}.

Let $\phi$ be a smooth cut-off function such that $\phi(r) = N_1 \pi$ for $r\le 1$
and $\phi(r)=0$ for $r\ge 2$.

Define $g(r,t)$ by
\begin{align}
f(r,t) = \phi(r) + r g(r,t). \label{ge4}
\end{align}

No boundary condition is needed for $g$ at $r=0$.\footnote{We shall regard $g$ as
a radial function on $\mathbb R^5$ and construct a classical solution
$g \in H^k(\mathbb R^5)$. By radial Sobolev embedding, $|g(r,t)| \lesssim r^{-2}$
as $r\to \infty$. Hence the boundary condition $f(\infty, t)=0$ causes no
trouble either.}

Note that
\begin{align}
\square_3 f & = - \Delta_3  \phi + \square_3 (r g) \notag \\
&= -\Delta_3 \phi +r \square_5 g - \frac 2 r g. \label{ge5}
\end{align}

By \eqref{ge1}, \eqref{ge4}, \eqref{ge5}, the equation for $g$ then takes the form
\begin{align}
\square_5 g
&= \frac 2 {r^2} g + \frac 1 r \Delta_3 \phi
+ \frac 1 r \phi_{<1} \cdot N(r, rg, (rg)^\prime) \notag \\
& \qquad+ \frac 1 r \phi_{>1} \cdot N(r, \phi+rg, (\phi+rg)^\prime), \label{ge6}
\end{align}
where $\phi_{>1}=1-\phi_{<1}$, and
$\phi_{<1}$ is a smooth cut-off function such that $\phi_{<1}(r)=1$ for $r<\frac 12$; $\phi_{<1}(r)=0$ for
$r\ge 1$.

In more detail,
\begin{align}
\square_5 g & = \frac {\phi_{<1}} {1+\tilde F_0(rg) g^2} \Bigl( \tilde F_1(rg) g^3 + \tilde F_2(rg) g^5 \notag\\
&\qquad- \tilde F_3(rg) \cdot g \cdot \bigl( (\partial_t g)^2 - (\partial_r g)^2 \bigr) \notag \\
& \qquad + \tilde F_4(rg) \cdot g^4 \cdot r \partial_r g \Bigr) \notag \\
& \qquad + \phi_{>1} \cdot \frac 2 {r^2} g + \frac 1 r \Delta_3 \phi \notag \\
& \qquad + \frac 1 r \phi_{>1} \cdot N(r, \phi+rg, (\phi+rg)^\prime), \label{ge7}
\end{align}

where
\begin{align*}
\tilde F_0(x) & = 2 \Bigl( \frac{\sin x} x \Bigr)^2, \\
\tilde F_1(x) &= \frac 2 {x^2} - \frac{\sin(2x)} {x^3}, \\
\tilde F_2(x) & = \frac{\sin(2x)} {x^3} - \frac{\sin^2 x \sin (2x)}{x^5}, \\
\tilde F_3(x) & = \frac{\sin(2x)} {x}, \\
\tilde F_4(x) & = - \frac{4\sin^2 x} {x^4} + \frac{2 \sin(2x)} {x^3}.
\end{align*}

It is not difficult to check that $\tilde F_i(x)$, $0\le i \le 4$ are well-defined for all
$x\in \mathbb R$ with the help of power series expansion. Observe that the functions
$\tilde F_i$ can all be written as
\begin{align*}
\tilde F_i(x) = F_i(x^2), \quad i=0,\cdots, 4,
\end{align*}
where $F_i$ are smooth functions satisfying
\begin{align}
\left\| \frac{d^k}{dx^k} F_i(x) \right \|_{L_x^\infty} \le C_k, \quad \forall\, k\ge 0, \label{tmp_May_14_921}
\end{align}
where $C_k$ are constants depending only on $k$.

The reason that we write $\tilde F_i(rg) =F_i(r^2 g^2)$ is that we shall regard $F_i(r^2g^2) =F_i(|x|^2 g^2)$
for $x\in \mathbb R^5$ which is smooth in $x$.  This will help local energy estimates in the local theory.

Now  we lift $g$ to be radial function on $\mathbb R^5$, clearly then
\begin{align*}
r \partial_r g = \sum_{i=1}^5 x_i \cdot \partial_{x_i} g = x \cdot \nabla g.
\end{align*}

Thus we rewrite \eqref{ge7} as
\begin{align}
\square_5 g & = \frac{\phi_{<1}}{1+F_0(r^2 g^2) g^2} \Bigl( F_1(r^2 g^2) g^3 + F_2(r^2 g^2) g^5 \notag \\
& \qquad -F_3 (r^2 g^2) \cdot g \cdot \bigl( (\partial_t g)^2 -(\nabla g)^2 \bigr) \notag \\
& \qquad +F_4(r^2 g^2) \cdot g^4 \cdot ( x \cdot \nabla g) \Bigr) \notag \\
& \qquad + \phi_{>1} \cdot \frac 2 {r^2} g + \frac 1 r \Delta_3 \phi \notag \\
& \qquad + \frac 1 r \phi_{>1} \cdot N(r, \phi+rg, (\phi+rg)^\prime). \label{ge8}
\end{align}

\noindent
For any integer $k$, we shall denote by $H^k_{rad}(\mathbb R^5)$ the usual $H^k$ Sobolev space restricted to radial
functions on $\mathbb R^5$.

\begin{prop}[Local wellposedness and continuation criteria] \label{prop1}
Let $k>\frac 52+1$ be an integer. Assume
\begin{align*}
(g, \partial_t g) \Bigr|_{t=0} = (g_0,g_1) \in H^k_{\text{rad}} (\mathbb R^5) \times H^{k-1}_{\text{rad}}(\mathbb R^5).
\end{align*}
Then there exists $T>0$ and a local solution $g \in C([0,T), H_{\text{rad}}^k(\mathbb R^5)) \cap
C^1([0,T), H_{\text{rad}}^{k-1}(\mathbb R^5))$ to \eqref{ge8}. Furthermore the solution can be continued
past any $T_1\ge T$ as long as
\begin{align}
\sup_{0\le t <T_1} G(t) <\infty, \label{ge9}
\end{align}
where
\begin{align}
G(t) = \left\| \langle x \rangle g(t) \right\|_{L_x^\infty(\mathbb R^5)}
+ \left\| \langle x \rangle (|\partial_t g|+|\nabla g|) \right\|_{L_x^\infty(\mathbb R^5)}. \label{ge10}
\end{align}
\end{prop}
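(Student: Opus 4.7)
The plan is to treat \eqref{ge8} as a semilinear wave equation on $\mathbb{R}^5$ of the form $\square_5 g = \mathcal{N}(x, g, \partial_t g, \nabla g)$, where $\mathcal{N}$ is genuinely smooth in all of its arguments. To verify this, observe that: (i) $\frac{1}{r}\Delta_3\phi$ is smooth and supported in the annulus $\{1\le r\le 2\}$ since $\phi$ is constant on $r\le 1$ and vanishes for $r\ge 2$; (ii) both $\phi_{>1}\cdot\frac{2}{r^2}g$ and $\frac{1}{r}\phi_{>1}\cdot N(r,\phi+rg,(\phi+rg)')$ are smooth bounded multipliers of $(g,\partial_t g,\nabla g)$, because $\phi_{>1}$ localizes to $r\ge 1/2$ where $1/r$ is harmless and all appearances of $\sin f$, $\cos f$, etc. are smooth compositions in $(r,g,\nabla g)$; (iii) the inner piece carrying $\phi_{<1}$ is built from the smooth compositions $F_i(|x|^2 g^2)$ controlled by \eqref{tmp_May_14_921}, together with polynomials in $g$, $\partial_t g$, and $x\cdot\nabla g$, all of which extend smoothly to radial functions on $\mathbb{R}^5$. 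Radiality of $g$ is preserved along the flow since $\mathcal{N}$ is $SO(5)$-equivariant.

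For existence I would run a standard Picard iteration in $H^k_{\text{rad}}(\mathbb{R}^5)\times H^{k-1}_{\text{rad}}(\mathbb{R}^5)$. Set $g^{(0)}\equiv 0$ and let $g^{(n)}$ solve $\square_5 g^{(n)}=\mathcal{N}(x,g^{(n-1)},\partial_t g^{(n-1)},\nabla g^{(n-1)})$ with the given data. Since $k>\tfrac52+1$, one has $H^{k-1}(\mathbb{R}^5)\hookrightarrow L^\infty$, so the standard $H^k$ energy estimate for $\square_5$ combined with Moser-type composition estimates for the smooth $\mathcal{N}$ yields a uniform bound on $\|g^{(n)}\|_{H^k}+\|\partial_t g^{(n)}\|_{H^{k-1}}$ over a time interval $[0,T]$ depending only on the size of the data. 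Running the analogous energy estimate in the lower norm $H^{k-1}\times H^{k-2}$ for $g^{(n+1)}-g^{(n)}$ (using that $\mathcal{N}$ is locally Lipschitz from $H^{k-1}\times H^{k-1}\times H^{k-1}$ into $H^{k-2}$) produces a contraction on some $[0,T']\subseteq[0,T]$. Passing to the limit and invoking persistence of regularity gives $g\in C([0,T'),H^k_{\text{rad}})\cap C^1([0,T'),H^{k-1}_{\text{rad}})$; uniqueness follows from the same lower-norm energy estimate applied to the difference of two solutions.

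For the continuation criterion, let $T^*$ be the maximal existence time and assume $M:=\sup_{0\le t<T^*}G(t)<\infty$. Define $E_j(t)=\|g(t)\|_{H^j}^2+\|\partial_t g(t)\|_{H^{j-1}}^2$. Commuting $\partial^\alpha$ with $\square_5$ for $|\alpha|\le k$ and applying the standard energy identity, the task reduces to establishing a tame Moser estimate of the form
\[
\|\mathcal{N}(\cdot,g,\partial_t g,\nabla g)\|_{H^{k-1}} \le C(M)\cdot\bigl(1+E_k(t)\bigr)^{1/2},
\]
after which Gronwall yields $E_k(t)\le E_k(0)\exp(C(M)T^*)$ on $[0,T^*)$ and the local theory lets us extend past $T^*$, contradicting maximality. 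The main obstacle — and the reason the blow-up quantity $G(t)$ in \eqref{ge10} carries the weight $\langle x\rangle$ — is the weighted Moser estimate itself: differentiating the compositions $F_i(|x|^2 g^2)$ produces factors of $x$ (from $\nabla(|x|^2)=2x$) together with factors of $g$ or $\nabla g$ via the chain rule, and each such factor must be absorbed by a power of $\langle x\rangle g$ or $\langle x\rangle|\nabla g|$ in $L^\infty$, which is precisely what $G(t)$ controls; similarly, any polynomial-in-$r$ growth coming from $\frac{1}{r}\phi_{>1}\cdot N(r,\phi+rg,(\phi+rg)')$ on the outer region is absorbed by the same weight. Once the weighted Moser estimate is in place the remainder of the argument is routine.
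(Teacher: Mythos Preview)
Your proposal is correct and is precisely the ``standard energy estimates'' argument the paper alludes to but omits: Picard iteration in $H^k\times H^{k-1}$ using Moser composition estimates (available since $k-1>\tfrac52$), followed by a Gronwall argument for the continuation criterion. One small correction worth noting: the $F_i(|x|^2 g^2)$ compositions sit under the cutoff $\phi_{<1}$, which localizes to $|x|\le 1$, so the $x$-factors produced by differentiating $|x|^2$ are harmless there; the genuine need for the weight $\langle x\rangle$ in $G(t)$ comes from the outer piece $\tfrac{1}{r}\phi_{>1}\,N(r,\phi+rg,(\phi+rg)')$, where $f=\phi+rg$ and $\partial f=\phi'+g+r\partial g$, so that repeated differentiation of $\sin(2f)$, $A_1^{-1}$, etc.\ generates factors $rg$, $r\nabla g$, $r\partial_t g$ that must be placed in $L^\infty$.
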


The proof of Proposition \ref{prop1} uses standard energy estimates and will be omitted here.
Our main result is

\begin{thm}[Global wellposedness for large data] \label{thm2}
Let $k\ge 4$ be an integer and assume
\begin{align*}
(g, \partial_t g ) \Bigr|_{t=0} = (g_0,g_1) \in H^k_{\text{rad}}  (\mathbb R^5) \times H^{k-1}_{\text{rad}}(\mathbb R^5).
\end{align*}
Then the corresponding solution in Proposition \ref{prop1} is global.
\end{thm}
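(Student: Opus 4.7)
The plan is to argue by contradiction using the continuation criterion in Proposition \ref{prop1}: assume a first blowup time $T<\infty$ and derive a uniform bound on the quantity $G(t)$ in \eqref{ge10} on $[0,T)$. The Skyrme energy \eqref{ge2a} is energy-supercritical with respect to \eqref{je5} and \eqref{ge8}, so I will not attack the $g$-equation directly. Instead, following the introduction, I will work with the auxiliary profile $\Phi$ defined in \eqref{RVA3}: the point is that \eqref{RVA4} is a derivative-free (but nonlocal) wave equation, so standard energy methods apply even though the ambient dimension $5$ and the Hardy critical constant will conspire against us. The key technical observation is that the map $g\mapsto\Phi$ is, for each fixed $r$, a smooth monotone diffeomorphism with $\partial_g \Phi\ge 1$; this allows Sobolev bounds to be transferred back and forth between $g$ and $\Phi$.

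The first serious estimate is a uniform $H^1$-bound for $\Phi$ on $[0,T)$. The $L^2$-control of $\Phi$ and $\partial_t\Phi$ is extracted from the Skyrme energy \eqref{ge2a} by direct computation using \eqref{RVA3}. For $\nabla\Phi$, one runs an energy estimate for $\square_5 \Phi$ against \eqref{RVA4}; since the RHS contains no derivatives of either $g$ or $\Phi$, the bad terms reduce to integrals of $H(r,t)$, whose sharp pointwise bound yields the defect inequality \eqref{RVA5}. The main obstacle is right here: on $\R^5$ the constant $\tfrac94=(d-2)^2/4$ is exactly the Hardy critical one, so \eqref{RVA5} is not coercive and no linear/Strichartz tool (cf.~\cite{BPST03}) is available. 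To extract a genuine $H^1$-bound I will argue by contradiction along a sequence $t_n\to T$ for which $\|\nabla\Phi(t_n)\|_{L^2(\R^5)}$ and $\|\Phi(t_n)/r\|_{L^2(\R^5)}$ both diverge while their regularized difference stays finite. After rescaling and passing to a weak limit one is forced into a profile saturating the critical Hardy inequality on $\R^5$. Using \eqref{RVA3} to relate such a limit back to $g$, and using the Skyrme energy bound on $g$ (which constrains the allowable profiles), I will rule out the existence of such an extremizer. This is the heart of the argument (corresponding to Proposition \ref{prop2}) and is where the fine nonlinear structure of both \eqref{RVA3} and \eqref{RVA4} is genuinely used.

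Once $\|\Phi(t)\|_{H^1(\R^5)}\le C(T)$ is in hand, the diffeomorphism \eqref{RVA3} provides weighted $L^2$-type control on $g$ itself. From here I proceed by a regularity bootstrap, alternating between $g$ and $\Phi$. Since the lifted nonlinearity on the RHS of \eqref{ge8} is smooth in $x\in\R^5$ thanks to \eqref{tmp_May_14_921} and contains at most one spatial derivative of $g$, differentiating \eqref{ge8} in $t$ and running radial energy estimates first propagates higher \emph{temporal} regularity using only already-controlled quantities. I then use the equation a second time to trade temporal derivatives for spatial ones, climbing successively to a uniform bound
\begin{align*}
\sup_{0\le t<T}\|g(t)\|_{H^4_{\text{rad}}(\R^5)}+\sup_{0\le t<T}\|\partial_t g(t)\|_{H^3_{\text{rad}}(\R^5)}\le C(T).
\end{align*}

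By radial Sobolev embedding on $\R^5$ (using $3>5/2$ together with the $|x|^{-2}$ decay available for radial $H^1$ functions), this controls the weighted $L^\infty$ norms that define $G(t)$ in \eqref{ge10}. Hence \eqref{ge9} holds, the solution extends past $T$, contradicting the assumption that $T$ was a first blowup time, and the solution is global. The most delicate step remains the $H^1$-control of $\Phi$: the Hardy constant is saturated, so coercivity must be recovered from the nonlinear structure rather than from any linear spectral input.
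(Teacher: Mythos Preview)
Your outline tracks the paper's four-step strategy, but the bootstrap step (your Step after the $H^1$ bound) has a genuine gap. You propose to differentiate the $g$-equation \eqref{ge8} in time and run energy estimates, claiming the RHS ``contains at most one spatial derivative of $g$''. But \eqref{ge8} contains the term $F_3(r^2g^2)\,g\,\bigl((\partial_t g)^2-(\nabla g)^2\bigr)$, which is \emph{quadratic} in first derivatives. After one $\partial_t$, the top-order coefficients involve $g\,\partial_t g$ and $g\,\nabla g$, and closing an energy estimate would require $L^\infty$-type control on $\partial g$---exactly what you are trying to produce. From $\|\Phi\|_{H^1}\lesssim 1$ one only gets $|g(r,t)|\lesssim r^{-3/4}$ near $r=0$ and $\partial_t g,\nabla g\in L^2$, which is not enough. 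The paper never differentiates \eqref{ge8} for this purpose; the entire point of \eqref{RVA3}--\eqref{RVA4} is that the $\Phi$-equation has a derivative-free RHS, so that $\partial_t$ of \eqref{ge36} yields the \emph{linear} equation $\square_5(\partial_t\Phi)=\bigl(\tfrac32+\tfrac12A^{-2}\bigr)(A-1)\,\partial_t\Phi$ (see \eqref{he9a}). Even here energy estimates alone do not close, because $A-1\notin L^\infty$ (only $A-1\in L^3(\R^5)$ from \eqref{he8}); the paper closes via Strichartz at regularity $\tfrac12$ (Lemma~\ref{lem_S1}, estimate \eqref{he10}), then iterates through \eqref{he13a} and \eqref{he44} to reach $H^4$. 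The $g$-equation \eqref{ge8} is invoked only once, at \eqref{he40}, after $g\in L^\infty$ and $\partial_t g,\partial_r g\in L^4$ have already been harvested from $\Phi$.

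On the $H^1$ step, your plan to rescale, pass to a weak limit, and rule out a Hardy extremizer is more delicate than what the paper actually does (the critical Hardy inequality on $\R^5$ has no $H^1$ extremizer, so the weak limit is typically zero and you would need a concentration-compactness substitute). The paper avoids weak limits altogether: the identity \eqref{ge62} together with the Skyrme energy gives the a priori bound $\bigl\|\partial_r\Phi+\tfrac{2}{r}\Phi\bigr\|_{L^2(\R^5)}\lesssim 1$. Hence the normalized $\tilde\Phi(t_n)=\Phi(t_n)/\|\nabla\Phi(t_n)\|_{L^2}$ satisfies $\bigl\|\partial_r\tilde\Phi(t_n)+\tfrac{2}{r}\tilde\Phi(t_n)\bigr\|_{L^2}\to 0$, while $\|\nabla\tilde\Phi(t_n)\|_{L^2}=1$ and $\|\tilde\Phi(t_n)/r\|_{L^2}\to\tfrac23$; expanding the square (integration by parts in $r^4\,dr$) gives $\bigl\|\partial_r\tilde\Phi+\tfrac{2}{r}\tilde\Phi\bigr\|_{L^2}^2=\|\nabla\tilde\Phi\|_{L^2}^2-2\|\tilde\Phi/r\|_{L^2}^2\to 1-\tfrac{8}{9}=\tfrac19\neq 0$, a direct numerical contradiction. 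This is the ``fine nonlinear structure'' you allude to, but it enters through the algebraic identity \eqref{ge62}, not through properties of a limiting profile.
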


By Proposition \ref{prop1}, the proof of Theorem \ref{thm2} reduces to showing that \eqref{ge9}
holds for any $T>0$. We shall achieve this by devising a new nonlinear energy bootstrap method.

\section{Nonlinear energy bootstrap: preliminary transformations}
Recall that \eqref{ge1} has the basic energy conservation
\begin{align}
E(t) & = \frac 1 2 \int_{0}^\infty  (1+ \frac{2\sin^2 f} {r^2} )
\Bigl( (\partial_t f)^2 + (\partial_r f)^2 \Bigr) r^2 dr \notag \\
& \qquad + \int_{0}^\infty \frac{\sin^2 f}{r^2}
\Bigl( 1+ \frac{\sin^2 f}{2r^2} \Bigr) r^2 dr \notag \\
& = E_0, \qquad \forall\, t>0. \label{ge11}
\end{align}

The continuation criteria \eqref{ge9} is supercritical with respect to this basic
energy conservation. To prove global wellposedness of \eqref{ge1} one certainly
needs a new strategy. In this section we explain the setup of our nonlinear energy
bootstrap argument.

Define $\tilde \Phi_1: (0,\infty) \times \mathbb R \to \mathbb R$ by
\begin{align}
\tilde \Phi_1 (\rho,z) = \int_{N_1 \pi}^z \Bigl( 1+ \frac{2\sin^2 y} {\rho^2} \Bigr)^{\frac 12 } dy.
\label{ge12}
\end{align}
The definition of $\tilde \Phi_1$ takes into consideration of the boundary condition \eqref{ge2} especially
when $N_1\ne 0$.

Define
\begin{align}
\Phi_1(r,t) = \tilde \Phi_1(r, f(r,t)). \label{ge13}
\end{align}

Then
\begin{align}
\square_3 \Phi_1 &= (\partial_{zz} \tilde \Phi_1)(r,f(r,t)) \Bigl( (\partial_t f)^2 - (\partial_r f)^2 \Bigr)
 + ( \partial_z \tilde \Phi_1)(r,f(r,t)) \square_3 f \notag\\
& \qquad - (\Delta_{3,\rho} \tilde \Phi_1)(r,f(r,t))
- 2 (\partial_{\rho} \partial_r \tilde \Phi_1) (r, f(r,t)) \partial_r f. \label{ge14}
\end{align}
Here $\Delta_{3,\rho}$ is the three-dimensional radial Laplacian in the $\rho$ variable, i.e.
\begin{align*}
(\Delta_{3,\rho} \tilde \Phi_1)(\rho, z)
= (\partial_\rho^2 \tilde \Phi_1)(\rho,z) + \frac 2 {\rho} (\partial_\rho \tilde \Phi_1)(\rho,z).
\end{align*}

Recall
\begin{align*}
A_1= 1+ \frac{2\sin^2 f}{r^2}.
\end{align*}

Easy to check that
\begin{align}
&(\partial_{zz} \tilde \Phi_1)(r,f(r,t))
+ (\partial_z \tilde \Phi_1)(r,f(r,t))
\cdot (- \frac 1 {A_1}) \cdot \frac{\sin(2f)} {r^2} =0, \notag \\
& -2(\partial_\rho \partial_z \tilde \Phi_1)(r,f(r,t))
-\frac {(\partial_z \tilde \Phi_1)(r,f(r,t))} {A_1} \cdot \frac{4\sin^2 f}{r^3}=0. \label{ge14a}
\end{align}

Therefore by \eqref{ge14}, \eqref{ge1} and \eqref{ge14a}, we get
\begin{align}
\square_3 \Phi_1 & = -A_1^{-\frac 12}
\cdot \frac{\sin(2f)} {r^2} - A_1^{-\frac 12}
\cdot \frac{\sin^2 f \cdot \sin(2f)} {r^4}
 - (\Delta_{3,\rho} \tilde \Phi_1) (r, f(r,t)). \label{ge15}
\end{align}

Denote
\begin{align}
B_1 = 1+ \frac{2\sin^2 y} {r^2}. \label{ge15a}
\end{align}

By a simple computation,
\begin{align*}
\Delta_{3,r} ( B_1^{\frac 12} ) = \frac 1 {r^2} \Bigl( B_1^{-\frac 12} - B_1^{-\frac 32} \Bigr).
\end{align*}

Hence
\begin{align}
(\Delta_{3,\rho} \tilde \Phi_1)(r,f(r,t))
= \frac 1 {r^2} \int_{N_1 \pi}^{f(r,t)}
\Bigl( B_1^{-\frac 12} - B_1^{-\frac 32} \Bigr) dy. \label{ge16}
\end{align}

By a tedious calculation, we have
\begin{align}
A_1^{-\frac 12} \cdot \frac{\sin(2f)} {r^2}
& = \frac 1 {r^2} \int_{N_1 \pi}^{f(r,t)} \partial_y
\Bigl( B_1^{-\frac 12} \cdot \sin(2y) \Bigr) dy \notag \\
& = \frac 1 {r^2}
\int_{N_1\pi}^{f(r,t)}
B_1^{-\frac 32}
\Bigl( 2-r^2 (B_1^2-1) \Bigr) dy. \label{ge17}
\end{align}

Similarly
\begin{align}
A_1^{-\frac 12} \cdot
\frac{\sin^2 f \cdot \sin(2f)}{r^4}
& = \frac 1 {r^2} \cdot \int_{N_1\pi}^{f(r,t)}
\Bigl( 2B_1^{\frac 12} - B_1^{-\frac 32} -B_1^{-\frac 12} \Bigr) dy \notag \\
& \qquad + \frac 12 \int_{N_1\pi}^{f(r,t)}
B_1^{-\frac 32} (-3B_1^3 + 5B_1^2 -B_1-1) dy. \label{ge18}
\end{align}

Plugging \eqref{ge16},  \eqref{ge17} and \eqref{ge18} into \eqref{ge15}, we obtain
\begin{align}
\square_3 \Phi_1
 = -\frac 2 {r^2} \Phi_1
+ \frac 12 \int_{N_1\pi}^{f(r,t)}
\Bigl( 3B_1^{\frac 32} -3B_1^{\frac 12} + B_1^{-\frac 12} -B_1^{-\frac 32} \Bigr) dy.
\label{ge20}
\end{align}

Equation \eqref{ge20} is still not very satisfactory since it contains terms of inverse
square potential type. To remove such terms, one more transformation is needed.

Define $\Phi_2(r,t)$ by
\begin{align}
\Phi_1(r,t) = r \Phi_2(r,t). \label{ge20a}
\end{align}

Then
\begin{align}
\square_3 \Phi_1 &= \square_3 (r \Phi_2) \notag \\
& = r \square_5 \Phi_2 - \frac 2 {r^2} \Phi_1. \label{ge21}
\end{align}

By \eqref{ge21}, equation \eqref{ge20} expressed in the $\Phi_2$ variable now takes
the form
\begin{align}
\square_5 \Phi_2
= -\frac 32 \Phi_2
+ \frac 1{2r} \int_{N_1\pi}^{f(r,t)}
\Bigl( 3 B_1^{\frac 32} + B_1^{-\frac 12} -B_1^{-\frac 32} \Bigr) dy. \label{ge22}
\end{align}

Although formally the RHS of \eqref{ge22} still contains $1/r$ terms which may be singular
when $r\to 0$, it actually causes no trouble in our energy bootstrap estimates later.
To see this, we bring back the $g$-function used in the local analysis.

Recall that
\begin{align}
f(r,t) = \phi(r) + r g(r,t), \label{ge23a}
\end{align}
where $\phi(r)\equiv N_1\pi$ for $r<1$ and $\phi(r)=0$ for $r\ge 2$.

Define
\begin{align}
B_2 = 1 + \frac{2\sin^2 (ry)} {r^2}. \label{ge24a}
\end{align}
Observe that $B_2$ is a smooth function (see the discussion preceding the estimate \eqref{tmp_May_14_921}).

Let $\phi_{<1}$ be a smooth cut-off function such that $\phi_{<1}(r)=1$ for $r\le \frac 12$ and
$\phi_{<1}(r)=0$ for $r>1$. By \eqref{ge23a}, \eqref{ge24a}, we have
\begin{align}
 &\frac 1{2r} \phi_{<1}(r) \int_{N_1\pi}^{f(r,t)}
\Bigl( 3B_1^{\frac 32} + B_1^{-\frac 12} -B_1^{-\frac 32} \Bigr) dy \notag \\
 =&  \frac 1{2r} \phi_{<1}(r) \int_{N_1 \pi}^{N_1 \pi+r g(r,t)}
\Bigl( 3B_1^{\frac 32} + B_1^{-\frac 12} -B_1^{-\frac 32} \Bigr) dy \notag \\
=&  \frac 12 \phi_{<1}(r) \int_{0}^{g(r,t)}
\Bigl( 3B_2^{\frac 32} + B_2^{-\frac 12} -B_2^{-\frac 32} \Bigr) dy. \label{ge25a}
\end{align}

In the second equality above, we have performed a change of variable
$y \to N_1\pi +ry$. Clearly \eqref{ge25a} is smooth as long as $g$ is smooth since
it has no singular terms in $r$.

By using \eqref{ge25a}, we rewrite \eqref{ge22} as
\begin{align}
\square_5 \Phi_2 & = -\frac 32 \Phi_2
 + \frac 12 \phi_{<1} \int_0^{g(r,t)}
\Bigl( 3 B_2^{\frac 32} +B_2^{-\frac 12} - B_2^{-\frac 32} \Bigr) dy \notag \\
& \qquad + \frac 1 {2r}
\phi_{>1} \int_{N_1\pi}^{f(r,t)}
\Bigl( 3B_1^{\frac 32} +B_1^{-\frac 12} -B_1^{-\frac 32} \Bigr) dy, \label{ge23}
\end{align}
where $\phi_{>1}=1-\phi_{<1}$ is localized to $r\gtrsim 1$.

Equation \eqref{ge23} is almost good for us since it no longer contains any
derivative terms or singularities in $r$. However there is one more problem.

By \eqref{ge12}, \eqref{ge13}, \eqref{ge20a}, we have
\begin{align}
\Phi_2(r,t) = \frac 1r \int_{N_1\pi}^{f(r,t)}
\Bigl( 1+ \frac{2 \sin^2 y} {r^2} \Bigr)^{\frac 12} dy. \label{ge24}
\end{align}

By \eqref{ge23} it is not difficult to check that $\Phi_2$ has no singularity
near $r\sim 0$. However for $r\ge 2$ by using energy conservation \eqref{ge11}
and radial Sobolev embedding, we get
$|f(r,t)| \lesssim r^{-1}$. If $N_1>0$, then \eqref{ge24} asserts that
\begin{align*}
\Phi_2(r,t) \sim \frac{Const}{r}, \quad\text{as $r\to \infty$}.
\end{align*}
In particular $\Phi_2 \notin L_x^2(\mathbb R^5)$ when we regard $\Phi_2$ as a radial
function on $\mathbb R^5$. We therefore need to introduce one more transformation to kill this
divergence.

To this end, we define
\begin{align}
\Phi(r,t) &= \Phi_2(r,t) +
\frac 1 3 \phi_{>1} \cdot \frac 1 {r}
\int_0^{N_1\pi} \Bigl( 3 B_1^{\frac 32} +B_1^{-\frac 12} -B_1^{-\frac 32} \Bigr)dy
\label{ge25}\\
 & = \frac 1 r \phi_{<1} \int_{N_1 \pi}^{f(r,t)} B_1^{\frac 12} dy  \label{ge26} \\
& \qquad + \frac 1r \phi_{>1} \int_{0}^{f(r,t)} B_1^{\frac 12} dy
\label{ge27} \\
& \qquad + \frac 1 r \phi_{>1}
\int_0^{N_1\pi}
\Bigl( B_1^{\frac 32} - B_1^{\frac 12}
+\frac 13 B_1^{-\frac 12} - \frac 13 B_1^{-\frac 32} \Bigr) dy. \label{ge28}
\end{align}

Since $\phi(r)\equiv N_1\pi$ for $r<1$, by \eqref{ge23a}, we have
\begin{align}
\eqref{ge26}
 = \phi_{<1} \int_0^{g(r,t)}
\Bigl( 1+ \frac{2\sin^2(ry+\phi(r))}{r^2} \Bigr)^{\frac 12} dy. \label{ge29}
\end{align}

For \eqref{ge27}, we have
\begin{align}
\eqref{ge27}
 = \phi_{>1} \int_0^{g(r,t)} \Bigl( 1+ \frac{2\sin^2(ry+\phi(r))}{r^2} \Bigr)^{\frac 12} dy
 + \frac 1 r \phi_{>1} \int_0^{\phi(r)} B_1^{\frac 12} dy. \label{ge30}
\end{align}

Note that $\phi(r)=0$ for $r\ge 2$, therefore we can write
\begin{align}
\frac 1 r \phi_{>1} \int_0^{\phi(r)} B_1^{\frac 12} dy = \phi_{\sim 1} (r), \label{ge31}
\end{align}
where $\phi_{\sim 1}$ is a smooth cut-off function localized to $r\sim 1$.

For \eqref{ge28}, observe that by \eqref{ge15a}
\begin{align*}
B_1^{\frac 32} - B_1^{\frac 12} = O\left( \frac 1 {r^2} \right), \quad r \gtrsim 1,
\end{align*}
and similarly
\begin{align*}
\frac 13 B_1^{-\frac 12} - \frac 1 3 B_1^{-\frac 32} = O \left( \frac 1 {r^2} \right),
\quad r \gtrsim 1.
\end{align*}

Therefore we shall write
\begin{align}
\eqref{ge28} = \frac 1 {r^3} \phi_{\gtrsim 1}(r), \label{ge32}
\end{align}
where $\phi_{\gtrsim 1}(r)$ is a smooth cut-off function localized to $r\gtrsim 1$ and can
vary from place to place.

By using \eqref{ge26}--\eqref{ge32}, we obtain
\begin{align*}
\Phi(r,t) = \int_0^{g(r,t)} \left(1+ \frac{2 \sin^2(ry+\phi(r))} {r^2} \right)^{\frac 12} dy
 + \phi_{\sim 1} + \frac 1 {r^3} \phi_{\gtrsim 1}(r).
\end{align*}

We can further include $\phi_{\sim 1}(r)$ into $\phi_{\gtrsim 1}(r)$ and simply write
\begin{align*}
\phi_{\sim 1}(r) + \frac 1 {r^3} \phi_{\gtrsim 1}(r) = \frac 1 {r^3} \phi_{\gtrsim 1}(r).
\end{align*}

Then
\begin{align}
\Phi(r,t)  = \int_0^{g(r,t)} \left(1+ \frac{2\sin^2(ry+\phi(r))} {r^2} \right)^{\frac 12} dy
 + \frac 1 {r^3} \phi_{\gtrsim 1}(r). \label{ge33}
\end{align}

On the other hand by \eqref{ge25} and a simple computation, we have
\begin{align}
\Phi(r,t) = \Phi_2(r,t) + \frac 1 r \cdot \phi_{\gtrsim 1}(r). \label{ge34}
\end{align}

Plugging \eqref{ge34} into \eqref{ge23} and using \eqref{ge25}, we get
\begin{align}
\square_5 \Phi & = \frac 1 {r^3} \cdot \phi_{\gtrsim 1} - \frac 32 \Phi
 + \frac 12 \phi_{<1} \int_0^{g(r,t)} \Bigl( 3B^{\frac 32}_2 +B_2^{-\frac 12} - B_2^{-\frac 32} \Bigr) dy
\notag \\
& \qquad + \frac 12 \cdot \phi_{>1} \cdot \frac 1 r
\int_0^{f(r,t)} \Bigl( 3 B_1^{\frac 32} + B_1^{-\frac 12} -B_1^{-\frac 32} \Bigr) dy. \label{ge35}
\end{align}

By using an argument similar to the derivation of \eqref{ge33}, we further simplify \eqref{ge35} as
\begin{align}
\square_5 \Phi  = \frac 1 {r^3} \phi_{\gtrsim 1} - \frac 32 \Phi
+ \frac 12 \int_0^{g(r,t)}
\Bigl( 3 B^{\frac 32} + B^{-\frac 12} - B^{-\frac 32} \Bigr) dy, \label{ge36}
\end{align}
where
\begin{align}
B= 1 + \frac{2\sin^2(ry +\phi(r))} {r^2}. \label{ge37}
\end{align}

Formula \eqref{ge33} then takes the form
\begin{align}
\Phi(r,t) = \int_0^{g(r,t)} B^{\frac 12} dy + \frac 1 {r^3} \phi_{\gtrsim 1}(r). \label{ge38}
\end{align}

We analyze \eqref{ge36}--\eqref{ge38} in the next section.

\section{Non-blowup of $H^1$-norm of $\Phi$}

The first step in our analysis is to control the $H^1$-norm of
$\Phi$. This includes $\|\Phi\|_{L_x^2(\mathbb R^5)}$, $\|\partial_t \Phi \|_{L_x^2(\mathbb R^5)}$
and $\| \nabla \Phi \|_{L_x^2(\mathbb R^5)}$. By \eqref{ge38}, \eqref{ge4}, we have
\begin{align*}
\partial_t \Phi = \frac 1 r \cdot \partial_t f \cdot (1+ \frac{2\sin^2 f} {r^2} )^{\frac 12},
\end{align*}
and therefore by \eqref{ge11}, we get
\begin{align}
\| \partial_t \Phi \|_{L_x^2(\mathbb R^5)} \lesssim 1. \label{May14_eq24}
\end{align}
By \eqref{ge37}, \eqref{ge38}, easy to see
\begin{align}
|\Phi(r,t)| \lesssim |g(r,t)| + |g(r,t)|^2 + \frac 1 {r^3} |\phi_{\gtrsim 1}(r)|. \label{May14_eq25}
\end{align}
By the assumption of Proposition \ref{prop1} and Sobolev embedding, we have $\| g(0)\|_{L_x^4(\mathbb R^5)} \lesssim 1$.
By \eqref{May14_eq25}, this gives $\|\Phi(0)\|_{L_x^2(\mathbb R^5)} \lesssim 1$. Using \eqref{May14_eq24}, we then have
\begin{align}
\| \Phi(t) \|_{L_x^2(\mathbb R^5)} \le Const\cdot t, \quad\forall\, t>0. \label{May14_eq26}
\end{align}

By \eqref{ge11}, we have
\begin{align*}
\| \partial_t f \|_{L_x^2(\mathbb R^3)} + \| \partial_r f \|_{L_x^2(\mathbb R^3)} \lesssim 1.
\end{align*}
Since $\| f(0)\|_{L_x^2(\mathbb R^3)} \lesssim 1$, we get
\begin{align}
\| f(t) \|_{H_x^1(\mathbb R^3)} \le Const \cdot t, \quad\forall\, t>0.\label{ge39}
\end{align}
By \eqref{ge4} and Hardy's inequality (see \eqref{ge47}), we get
\begin{align}
\| g(t) \|_{H_x^1(\mathbb R^5)} \le Const \cdot t, \quad\forall\, t>0. \label{ge40}
\end{align}

However it is not difficult to check that
 \eqref{ge11} and \eqref{ge40} are insufficient to bound $\| \nabla \Phi\|_{L_x^2(\mathbb R^5)}$.

One may try to do Strichartz. But there is one problem as we now explain.

Imagine that
\begin{align}
g(r,t) \sim \frac 1 r, \label{ge41}
\end{align}
\emph{for a range of values of $r\ll 1$}.\footnote{Certainly \eqref{ge41} cannot hold
for all $r\to 0$ since $g$ is assumed to be regular at $r=0$.}

Then by \eqref{ge38},
\begin{align*}
\Phi(r,t) \sim \frac{\sqrt 2} r g (r,t),
\end{align*}
and
\begin{align*}
\frac 3 2 \int_0^{g(r,t)} B^{\frac 32} dy & \sim \frac{3\sqrt 2} {r^3} g(r,t) \\
& \sim \frac 3 {r^2} \Phi(r,t).
\end{align*}

Therefore for a range of values of $r\ll1$, the linear part of \eqref{ge36} takes
the form
\begin{align}
\square_5 \Phi = - \frac 3 2 \Phi + \frac 3 {r^2} \Phi. \label{ge42}
\end{align}

Equation \eqref{ge42} is a wave operator with \emph{negative} inverse square potential.
Since $d=5$ and
\begin{align*}
3> \frac{(d-2)^2} 4,
\end{align*}
no Strichartz is available (cf. \cite{BPST03}). This destroys the hope
of employing good linear estimates.

Therefore a new idea is required to establish $H^1$-norm bound of $\Phi$. In particular we
shall use a nonlinear approach which exploits in an essential way the structure of the equation.

\begin{lem} \label{lem1}
There exists $r_0>0$ sufficiently small, such that for any $0\le r \le r_0$, we have
\begin{align*}
F(\beta) = \int_0^\beta (r^2+2\sin^2 y)^{\frac 12}
\Bigl( \frac 3 4 - \sin^2 y \Bigr) dy \ge 0, \quad \forall\, \beta\ge 0.
\end{align*}
If $r>0$, then the equality holds iff $\beta=0$.
\end{lem}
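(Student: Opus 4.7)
The plan is to locate the minimum of $F$ over $\beta\ge 0$ with $r\in[0,r_0]$. Set $h_r(y) = (r^2+2\sin^2 y)^{1/2}$, so that $F'(\beta) = h_r(\beta)(3/4-\sin^2\beta)$. On $[0,\pi]$ this has the sign of $3/4-\sin^2\beta$: positive on $[0,\pi/3)\cup(2\pi/3,\pi]$ and negative on $(\pi/3,2\pi/3)$. Hence $F$ increases on $[0,\pi/3]$, decreases on $[\pi/3,2\pi/3]$, and increases on $[2\pi/3,\pi]$, so its minimum on $[0,\pi]$ is attained at $\beta=0$ (where $F(0)=0$) or at $\beta=2\pi/3$. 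Since the integrand is $\pi$-periodic in $y$, a translation gives $F(n\pi+s)=nF(\pi)+F(s)$ for $s\in[0,\pi)$, so global non-negativity on $[0,\infty)$ (with strict positivity for $r>0$ and $\beta>0$) will follow once I show $F(2\pi/3;r)\ge 0$ together with $F(\pi;r)>0$, with strict inequality at $2\pi/3$ whenever $r>0$.

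For the baseline $r=0$, the substitution $u=\cos y$ on $[0,\pi]$ (where $\sin y\ge 0$) gives the closed form
\begin{align*}
F(\beta;0)=\sqrt 2\int_0^\beta \sin y\,(3/4-\sin^2 y)\,dy=\tfrac{\sqrt 2}{12}\bigl(1-\cos 3\beta\bigr), \qquad \beta\in[0,\pi],
\end{align*}
which is non-negative with zeros exactly at $\beta=0$ and $\beta=2\pi/3$. In particular $F(\pi;0)=\sqrt 2/6>0$, so by continuity of $r\mapsto F(\pi;r)$ (dominated convergence) the inequality $F(\pi;r)>0$ persists for $r\le r_0$ sufficiently small. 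What remains, and is the main obstacle, is to promote the borderline value $F(2\pi/3;0)=0$ to a strict positivity when $r>0$.

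Using $h_r(y)-\sqrt 2\sin y = r^2/(h_r(y)+\sqrt 2\sin y)\ge 0$ on $[0,\pi]$, I write
\begin{align*}
F(2\pi/3;r)=\int_0^{2\pi/3}\bigl[h_r(y)-\sqrt 2\sin y\bigr]\bigl(3/4-\sin^2 y\bigr)\,dy,
\end{align*}
and split the integral at a fixed $\delta\in(0,\pi/6]$. On $[\delta,2\pi/3]$ the bracket is bounded above by $r^2/(\sqrt 2\sin\delta)$, so the contribution from this (possibly negative) piece is at worst $-C(\delta)r^2$. On $[0,\delta]$ the weight $3/4-\sin^2 y\ge 1/2$ is positive, and the elementary lower bound $h_r(y)-\sqrt 2\sin y\gtrsim r^2/\sqrt{r^2+y^2}$ yields
\begin{align*}
\int_0^\delta\bigl[h_r(y)-\sqrt 2\sin y\bigr]\bigl(3/4-\sin^2 y\bigr)\,dy\gtrsim r^2\int_0^\delta\frac{dy}{\sqrt{r^2+y^2}}\gtrsim r^2\log(\delta/r)
\end{align*}
for $r\le\delta$. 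The logarithmic enhancement $r^2\log(1/r)$ beats the $C(\delta)r^2$ error once $r_0$ is small enough, so $F(2\pi/3;r)>0$ on $(0,r_0]$. Assembling: for $r\in(0,r_0]$ and $\beta=n\pi+s>0$ with $s\in[0,\pi)$, strict monotonicity of $F$ on $[0,\pi/3]$ from $F(0)=0$, the bound $F\ge F(2\pi/3;r)>0$ on $[\pi/3,\pi]$, and $F(n\pi+s;r)=nF(\pi;r)+F(s;r)>0$ for $n\ge 1$ together give $F(\beta;r)>0$; the delicate point is the sharp logarithmic gain at $\beta=2\pi/3$ that rescues the cancellation present at $r=0$.
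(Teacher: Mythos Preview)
Your proof is correct and follows essentially the same route as the paper: both reduce via periodicity and critical-point analysis to showing $F(2\pi/3;r)>0$ for small $r>0$, and both exploit the identity $h_r(y)-\sqrt 2\sin y = r^2/(h_r(y)+\sqrt 2\sin y)$ to compare with the borderline case $r=0$. Your explicit closed form $F(\beta;0)=\tfrac{\sqrt 2}{12}(1-\cos 3\beta)$ and the quantified $r^2\log(1/r)$ gain make the key step more transparent than the paper's terse assertion that the difference quotient $(\tilde F(\rho)-\tilde F(0))/\rho$ is positive for small $\rho$.
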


\begin{proof}[Proof of Lemma \ref{lem1}]
By a simple calculation, we have
\begin{align*}
\int_0^{\pi} (\sin y) \cdot \Bigl( \frac 3 4 - \sin^2 y \Bigr) dy = \frac 16.
\end{align*}
Clearly there exists $r_1>0$ sufficiently small such that
\begin{align}
\int_0^{\pi}
(r^2+2\sin^2 y)^{\frac 12}
\Bigl( \frac 34 -\sin^2 y \Bigr)dy \ge \frac 1 {12}, \quad \forall\, 0\le r <r_1. \label{fe1}
\end{align}

Consider $m\pi \le \beta<(m+1)\pi$ and $m$ is large. Then by \eqref{fe1}, for $0\le r <r_1$, we have
\begin{align*}
& \int_0^{\beta} (r^2+ 2\sin^2 y)^{\frac 12} \Bigl( \frac 34- \sin^2 y \Bigr) dy \\
\ge & \frac 1 {12} m + \int_{m\pi}^\beta (r^2+2\sin^2 y)^{\frac 12} \Bigl( \frac 34 -\sin^2 y\Bigr) dy \\
\ge & \frac{m}{12} -O(1) > \frac 1{12},
\end{align*}
if $m$ is taken to be sufficiently large.

Therefore we only need to consider $F(\beta)$ on a compact interval $[0,m\pi]$.

Observe that $F(0)=0$, $F(m\pi)>\frac 1{12}$. It suffices to consider critical points of $F$
in $(0,m\pi)$ and prove the positivity of $F$ at these points.

Solving $F^\prime(\beta)=0$ yields
\begin{align*}
\sin(\beta) = \pm \frac{\sqrt 3} 2.
\end{align*}
Hence
\begin{align*}
\beta= j\pi+ \frac {\pi}3 \quad \text{or}\quad j\pi+\frac{2\pi}3, \quad j\ge 0, \, j\in \mathbb Z.
\end{align*}

If $\beta=j\pi+ \frac{\pi}3$, then for $0\le r <r_1$, by \eqref{fe1},
\begin{align*}
F(j\pi+ \frac{\pi} 3) & = \int_0^{j\pi+\frac{\pi}3} (r^2+ 2\sin^2 y)^{\frac 12}\Bigl( \frac 34 -\sin^2 y \Bigr) dy \notag \\
& \ge \frac{j}{12} + \int_0^{\frac {\pi} 3} (r^2+2\sin^2 y)^{\frac 12} \Bigl( \frac 34-\sin^2 y \Bigr) dy \\
& >0.
\end{align*}

If $\beta= j\pi+ \frac{2\pi} 3$, then for $0\le r <r_1$,
\begin{align*}
F(j\pi+ \frac{2\pi}3 ) & \ge \frac j{12} + \int_0^{\frac{2\pi}3}
(r^2+ 2\sin^2 y)^{\frac 12} \Bigl( \frac 34 -\sin^2 y \Bigr) dy \\
& \ge \int_0^{\frac{2\pi}3} (r^2+2\sin^2 y)^{\frac 12} \Bigl( \frac 34 -\sin^2 y \Bigr) dy.
\end{align*}

Define
\begin{align*}
\tilde F(\rho) = \int_0^{\frac{2\pi}3} (\rho + \sin^2 y)^{\frac 12} \Bigl(\frac 34 -\sin^2 y \Bigr) dy.
\end{align*}

Easy to check $\tilde F(0)=0$.

On the other hand,
\begin{align*}
\frac{\tilde F(\rho) -\tilde F(0)} {\rho} & = \int_0^{\frac{2\pi}3}
\frac 1 {\sqrt{\rho+\sin^2 y} + \sqrt{\sin^2 y} } \cdot \Bigl( \frac 34 -\sin^2 y \Bigr) dy \\
&>0, \qquad \text{for $\rho$ sufficiently small.}
\end{align*}
Hence  $F(j\pi+\frac{2\pi}3)>0$ for $0<r\le r_0$, where $r_0$ is sufficiently small.
\end{proof}

Define $G_i: \; (0,\infty) \times \mathbb R \to \mathbb R$, $i=0,1,2$ by
\begin{align}
G_0(r, w) & = \int_0^w \Bigl( 1+ \frac{2\sin^2 (r y)} {r^2} \Bigr)^{\frac 12}
\cdot \frac{2\sin^2(r y)} {r^2} dy, \label{ge43} \\
G_1(r,z) & = \frac 32 \int_0^z G_0(r,w) \Bigl( 1+ \frac{2\sin^2(r w)}{r^2} \Bigr)^{\frac 12} dw,
\label{ge44} \\
G_2(r,w) & = \int_0^w \Bigl( 1+ \frac{2\sin^2(r y)}{r^2} \Bigr)^{\frac 12} dy. \label{ge45}
\end{align}

\begin{cor} \label{cor1}
For any $0<r\le r_0$, $z\in \mathbb R$, we have
\begin{align}
\Bigl| G_1(r,z) \Bigr| \le \frac 9 4 \cdot \frac 1 2 \cdot
\frac{\bigl( G_2(r,z) \bigr)^2}{r^2}. \label{ge46}
\end{align}
\end{cor}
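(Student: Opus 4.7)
The plan is to reduce the claimed inequality to a simple ODE comparison whose right-hand side is precisely the nonnegative quantity $F(\beta)$ treated in Lemma \ref{lem1}.

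First I would dispose of the sign/symmetry issues. Writing $h(u) = (r^2 + 2\sin^2 u)^{1/2}$, note that $h$ is an even function of $u$. Consequently the integrand defining $G_0(r,w)$ is even in $y$, so $G_0(r,\cdot)$ is odd; likewise $G_2(r,\cdot)$ is odd, and the integrand defining $G_1(r,z)$ is a product of two odd functions times an even one, so $G_1(r,\cdot)$ is even. Thus both sides of \eqref{ge46} depend only on $|z|$, and since $G_1(r,z) \ge 0$ when $z \ge 0$ (the integrand is nonnegative there), it suffices to prove the inequality for $z \ge 0$ without the absolute value.

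Next I would perform the change of variables $u = ry$ (respectively $v = rw$) to rewrite everything in terms of $h$. Setting $Z = rz \ge 0$ one finds
\begin{align*}
r^2 G_2(r,z) = P(Z) := \int_0^Z h(u)\,du,
\qquad
\tfrac{2}{3} r^6 G_1(r,z) = Q(Z) := \int_0^Z h(v)\!\int_0^v h(u) \cdot 2\sin^2 u \, du\, dv.
\end{align*}
In these terms, the desired bound \eqref{ge46} becomes simply $Q(Z) \le \tfrac{3}{4} P(Z)^2$.

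The key step is then to introduce $\Psi(Z) = \tfrac{3}{4} P(Z)^2 - Q(Z)$ and differentiate. Clearly $\Psi(0)=0$, and a direct computation gives
\begin{align*}
\Psi'(Z)
&= \tfrac{3}{2} P(Z) h(Z) - h(Z)\!\int_0^Z h(u) \cdot 2\sin^2 u\, du \\
&= 2 h(Z) \int_0^Z h(u) \Bigl( \tfrac{3}{4} - \sin^2 u \Bigr) du
= 2 h(Z) F(Z),
\end{align*}
where $F$ is exactly the function from Lemma \ref{lem1}. Since $h(Z) > 0$ and $F(Z) \ge 0$ for all $Z \ge 0$ by that lemma (this is where we use the hypothesis $r \le r_0$), we conclude $\Psi' \ge 0$ on $[0,\infty)$, hence $\Psi(Z) \ge 0$, which is the inequality $Q(Z) \le \tfrac{3}{4} P(Z)^2$, i.e.\ $G_1 \le \tfrac{9}{8}\, G_2^2/r^2$ as required.

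The only nontrivial ingredient is the positivity of $F$, but that has already been isolated in Lemma \ref{lem1}; the rest is a short algebraic manipulation combined with a one-line monotonicity argument. No obstacle beyond correctly matching the constants $\tfrac{3}{4}$ and $\tfrac{9}{8}$ after the change of variables is anticipated.
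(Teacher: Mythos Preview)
Your proposal is correct and is essentially the paper's own argument. The paper works directly in the original variables: it uses Lemma~\ref{lem1} to get the pointwise bound $\tfrac32 G_0(r,w)\le \tfrac{9}{4r^2}G_2(r,w)$ for $w\ge 0$, then integrates against $\partial_w G_2$ to produce $\tfrac12(G_2)^2$; your version first rescales by $u=ry$ and then differentiates $\Psi=\tfrac34 P^2-Q$, but since $\Psi'(Z)=2h(Z)F(Z)$ is exactly $G_2'\cdot\bigl(\tfrac{9}{4r^2}G_2-\tfrac32 G_0\bigr)$ in the new variables, the two computations coincide line for line.
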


\begin{proof}[Proof of Corollary \ref{cor1}]
Since $G_1(r,z)$ is an even function of $z$, it suffices to consider the case
$z>0$. By Lemma \ref{lem1} for $w\ge 0$,
\begin{align*}
0\le \frac 32 G_0(r,w) \le \frac 9 4 \cdot \frac 1 {r^2} \cdot G_2(r,w).
\end{align*}
Therefore
\begin{align*}
0\le G_1(r,z) & \le \frac 94 \cdot \frac 1 {r^2} \cdot \int_0^z G_2(r,w)
\cdot \Bigl( 1+ \frac{2\sin^2(r w)}{r^2} \Bigr)^{\frac 12 } dw \\
& = \frac 94 \cdot \frac 1 {r^2} \int_0^z G_2(r,w)
\cdot (\partial_w G_2)(r,w) dw \\
& = \frac 9  4 \cdot \frac 12 \cdot \frac{\bigl( G_2(r,z) \bigr)^2}{r^2}.
\end{align*}
\end{proof}

\begin{lem}[Hardy's inequality] \label{lem_Hardy}
Let $d\ge 3$. Then
\begin{align}
\int_{\mathbb R^d} \frac{f^2}{|x|^2} dx \le \frac 4 {(d-2)^2}
\int_{\mathbb R^d} |\nabla f|^2 dx, \quad \forall\, f\in C_0^\infty(\mathbb R^d). \label{ge47}
\end{align}
The constant $\frac 4 {(d-2)^2}$ is sharp.
\end{lem}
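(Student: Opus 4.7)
The plan is to use the standard vector-field/integration-by-parts argument. The key algebraic identity is
\begin{align*}
\nabla \cdot \frac{x}{|x|^2} = \frac{d-2}{|x|^2}, \qquad x \in \R^d \setminus \{0\},
\end{align*}
which is a direct computation (or follows from the general formula $\nabla\cdot(x|x|^{-\alpha}) = (d-\alpha)|x|^{-\alpha}$ with $\alpha=2$). Since $d\ge 3$, the coefficient $d-2$ is strictly positive.

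Let $f\in C_0^\infty(\R^d)$. Multiply the identity above by $f^2$ and integrate. Away from the origin, and using that $f$ is compactly supported so that boundary terms at infinity vanish, integration by parts yields
\begin{align*}
(d-2) \int_{\R^d} \frac{f^2}{|x|^2}\,dx
= \int_{\R^d} f^2\, \nabla\cdot\frac{x}{|x|^2}\,dx
= -\int_{\R^d} \nabla(f^2)\cdot \frac{x}{|x|^2}\,dx
= -2\int_{\R^d} \frac{f\, x\cdot\nabla f}{|x|^2}\,dx.
\end{align*}
(One should justify the boundary term at $x=0$ by first integrating over $|x|>\eps$ and letting $\eps\to 0$; the flux through $|x|=\eps$ is bounded by $\eps^{d-2}\|f\|_{L^\infty}^2\to 0$ since $d\ge 3$ and $f$ is smooth.)

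Now apply Cauchy--Schwarz with the splitting $f\,x\cdot\nabla f/|x|^2 = (f/|x|)\cdot ((x/|x|)\cdot \nabla f)$:
\begin{align*}
\left|\int_{\R^d} \frac{f\, x\cdot \nabla f}{|x|^2}\,dx \right|
\le \left(\int_{\R^d} \frac{f^2}{|x|^2}\,dx\right)^{1/2}
\left(\int_{\R^d} |\nabla f|^2\,dx\right)^{1/2}.
\end{align*}
Combining these two displays gives
\begin{align*}
(d-2) \int_{\R^d} \frac{f^2}{|x|^2}\,dx
\le 2\left(\int_{\R^d} \frac{f^2}{|x|^2}\,dx\right)^{1/2}
\left(\int_{\R^d} |\nabla f|^2\,dx\right)^{1/2},
\end{align*}
and dividing through by $(d-2)\bigl(\int f^2/|x|^2\bigr)^{1/2}$ (the case where this is zero is trivial) and squaring yields \eqref{ge47}.

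The only step with any subtlety is the justification of the integration by parts near $x=0$, which is routine once one truncates to $|x|>\eps$ as indicated. Sharpness of the constant $4/(d-2)^2$ can be seen by testing against a smoothed truncation of the formal extremizer $f(x)=|x|^{-(d-2)/2}$, for which Cauchy--Schwarz becomes asymptotically an equality; since the inequality is the only thing invoked later in the paper, I would state sharpness without proof (or give only a brief remark).
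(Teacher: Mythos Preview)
Your proof is correct and is the standard vector-field argument for Hardy's inequality. The paper itself does not give a proof of this lemma at all; it is stated without proof as a well-known classical result and simply invoked where needed. So there is nothing to compare against, and your write-up (including the remark on the $\eps$-truncation near the origin and the brief comment on sharpness via the formal extremizer $|x|^{-(d-2)/2}$) is entirely appropriate if a proof is to be supplied.
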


The goal of this section is to prove the following

\begin{prop}[Non-blowup of $H^1$ norm of $\Phi$] \label{prop2}
Let $T>0$ be the maximal lifespan of the local solution $g$ constructed in
Proposition \ref{prop1}. If $T<\infty$, then
\begin{align}
\sup_{0\le t <T}
\Bigl( \| \Phi(t) \|_{H_x^1(\mathbb R^5)} + \| \partial_t \Phi(t) \|_{L_x^2(\mathbb R^5)}
\Bigr) <\infty. \label{ge48}
\end{align}
\end{prop}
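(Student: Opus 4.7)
The plan is to derive a nonlinear energy identity for $\Phi$ from \eqref{ge36}, reduce matters via Corollary \ref{cor1} to a borderline Hardy estimate in dimension $5$, and then rule out the remaining gradient-blowup scenario by a limiting and contradiction argument that exploits the structural formula \eqref{ge38}. The bounds $\|\partial_t\Phi(t)\|_{L^2_x}\lesssim 1$ and $\|\Phi(t)\|_{L^2_x}\le Ct$ are already established in \eqref{May14_eq24} and \eqref{May14_eq26}, so only $\|\nabla\Phi(t)\|_{L^2_x}$ requires work.

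Step 1 (nonlinear energy identity). I would multiply \eqref{ge36} by $\partial_t\Phi$ and integrate over $\mathbb{R}^5$. From \eqref{ge38} one has $\partial_t\Phi = B(r,g)^{1/2}\partial_t g$, so the nonlinear source $\tfrac{1}{2}\int_0^g(3B^{3/2}+B^{-1/2}-B^{-3/2})\,dy$ multiplied by $\partial_t\Phi$ is the time derivative of a scalar potential
$$M(r,g):=\int_0^g B(r,w)^{1/2}\cdot\tfrac{1}{2}\int_0^w\bigl(3B^{3/2}+B^{-1/2}-B^{-3/2}\bigr)(r,y)\,dy\,dw.$$
Writing $B^{3/2}=B^{1/2}+\tfrac{2\sin^2(ry+\phi(r))}{r^2}B^{1/2}$ inside $M$ separates two dominant contributions: a symmetric piece $\tfrac{3}{4}\bigl(\int_0^g B^{1/2}dy\bigr)^2$, which on $\{r\le r_0\}$ coincides with $\tfrac{3}{4}\Phi^2$ and cancels exactly against the potential energy $\tfrac{3}{4}\Phi^2$ produced by the $-\tfrac{3}{2}\Phi$ term in \eqref{ge36}, and the quantity $H(r,t)$ defined in the introduction. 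All remaining contributions (the $B^{\pm 1/2}$ pieces of $M$, the source $\tfrac{1}{r^3}\phi_{\gtrsim 1}\partial_t\Phi$, and the contribution of $\{r>r_0\}$) are bounded using $\|\partial_t\Phi\|_{L^2}\lesssim 1$ and the $L^2$ bounds on $\Phi$ and $g$ from \eqref{May14_eq26} and \eqref{ge40}. Integrating in time then yields
$$\int_{\mathbb{R}^5}\left(\tfrac{1}{2}|\nabla\Phi(t)|^2-\phi_{<r_0}(r)\,H(r,t)\right)dx\le C(T),\quad 0\le t<T.$$

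Step 2 (sharp Hardy comparison). For $r\le r_0$ the cutoff $\phi(r)\equiv N_1\pi$ gives $\sin^2(ry+\phi(r))=\sin^2(ry)$, so from \eqref{ge38} and \eqref{ge43}--\eqref{ge45} one identifies $\Phi(r,t)=G_2(r,g(r,t))$ and $H(r,t)=G_1(r,g(r,t))$. Corollary \ref{cor1} then supplies the pointwise bound $H\le\tfrac{9}{8}\Phi^2/r^2$; doubling Step 1 and absorbing the contribution of $\{r>r_0\}$ into $r_0^{-2}\|\Phi\|_{L^2}^2$ produces
$$0\le\int_{\mathbb{R}^5}\left(|\nabla\Phi(t)|^2-\tfrac{9}{4}\,\tfrac{|\Phi(t)|^2}{r^2}\right)dx\le C(T),$$
where the lower bound is the sharp Hardy inequality in $d=5$ (Lemma \ref{lem_Hardy}), since $(d-2)^2/4=9/4$.

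Step 3 (the hard part: ruling out gradient blowup). The inequality of Step 2 is exactly non-coercive. To close the argument I would reason by contradiction: assume a sequence $t_n\uparrow T$ with $\|\nabla\Phi(t_n)\|_{L^2}\to\infty$, normalize $\Psi_n:=\Phi(t_n)/\|\nabla\Phi(t_n)\|_{L^2}$, and rescale to a concentration scale $\lambda_n$, setting $\widetilde\Psi_n(x):=\lambda_n^{3/2}\Psi_n(\lambda_n x)$, so as to extract a nontrivial weak limit. The vanishing of the Hardy deficit along the sequence, combined with a quantitative stability form of sharp Hardy, forces $\widetilde\Psi_n$ to converge (modulo subsequences and scalar multiples) to the formal extremizer $|x|^{-3/2}$. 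However \eqref{ge38} forces $\Phi(r,t_n)=G_2(r,g(r,t_n))$ on $\{r\le r_0\}$, and since $g(\cdot,t_n)\in H^k_{\mathrm{rad}}(\mathbb{R}^5)$ is regular at the origin by Proposition \ref{prop1}, $\Phi(\cdot,t_n)$ remains bounded as $r\to 0$; this is incompatible with the supposed $|x|^{-3/2}$-type concentration of $\widetilde\Psi_n$. Combining with the continuity of $\Phi$ in $L^2$ provided by $\|\partial_t\Phi\|_{L^2}\lesssim 1$ and the bound $\|\Phi(t)\|_{L^2}\le Ct$ contradicts the assumption and establishes \eqref{ge48}. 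The main technical obstacle lies precisely in this limiting procedure: $\Phi$ depends on $g$ non-locally through $B$, and the rescaling must be tracked carefully to extract a contradiction from the regularity of $g$ at the origin.
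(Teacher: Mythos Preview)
Your Steps 1--2 are essentially the paper's argument (with slightly different bookkeeping of which terms are absorbed into the potential), and they correctly reproduce the borderline inequality
\[
0\le \int_{\mathbb R^5}\Bigl(|\nabla\Phi(t)|^2-\tfrac94\,\tfrac{|\Phi(t)|^2}{r^2}\Bigr)\,dx\le C(T).
\]

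The gap is in Step 3. Your proposed contradiction hinges on two ingredients that do not hold as stated. First, the sharp Hardy inequality in $\mathbb R^5$ has \emph{no} extremizer in $\dot H^1$, so ``convergence (after rescaling) to $|x|^{-3/2}$'' is not a statement about weak limits in $\dot H^1$; making it precise via a quantitative stability result is nontrivial and you have not specified one. Second, and more seriously, the assertion ``$\Phi(\cdot,t_n)$ remains bounded as $r\to 0$ because $g(\cdot,t_n)\in H^k_{\mathrm{rad}}$'' is true for each \emph{fixed} $t_n$ but is \emph{not uniform} in $n$: at this stage of the argument there is no bound on $\|g(t_n)\|_{L^\infty}$ (controlling that quantity is the whole point of the paper), so $\Phi(0,t_n)$ may well diverge at the same rate as $\|\nabla\Phi(t_n)\|_{L^2}$. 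After normalization the putative origin bound disappears, and nothing prevents the rescaled $\widetilde\Psi_n$ from developing the $r^{-3/2}$ profile. Thus no contradiction has been reached.

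The paper closes this gap by a completely different, elementary device that avoids rescaling, compactness, and stability of Hardy altogether. Differentiating \eqref{ge38} in $r$ and regrouping, one obtains for $r\le\tfrac12$ the structural identity (see \eqref{ge62})
\[
\partial_r\Phi+\frac{2}{r}\Phi
=\Bigl(1+\tfrac{2\sin^2 f}{r^2}\Bigr)^{1/2}\frac{\partial_r f}{r}
+\frac1r\int_0^{g}\Bigl(1+\tfrac{2\sin^2(ry)}{r^2}\Bigr)^{-1/2}dy.
\]
The first term is in $L^2(\mathbb R^5)$ uniformly in $t$ by the Skyrme energy \eqref{ge11}; the second is bounded by $|g|/r$, hence in $L^2$ by Hardy and \eqref{ge40}. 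Together with the easy region $r\gtrsim1$ this gives the \emph{uniform} bound $\|\partial_r\Phi(t)+\tfrac2r\Phi(t)\|_{L^2(\mathbb R^5)}\lesssim 1$. Dividing by $\|\nabla\Phi(t_n)\|_{L^2}\to\infty$ yields $\|\partial_r\tilde\Phi(t_n)+\tfrac2r\tilde\Phi(t_n)\|_{L^2}\to 0$. But for radial $u$ in $\mathbb R^5$ one has the exact identity
\[
\Bigl\|\partial_r u+\tfrac{2}{r}u\Bigr\|_{L^2}^2
=\|\partial_r u\|_{L^2}^2-2\Bigl\|\tfrac{u}{r}\Bigr\|_{L^2}^2,
\]
so with $\|\nabla\tilde\Phi(t_n)\|_{L^2}=1$ and $\|\tilde\Phi(t_n)/r\|_{L^2}\to\tfrac23$ one gets $0=1-2\cdot\tfrac49=\tfrac19$, a contradiction. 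The point is that the structure of $\Phi$ supplies a \emph{second} quadratic form (with coefficient $2/r$ rather than the Hardy-critical $3/(2r)$) that is uniformly bounded; it is this mismatch of coefficients, not any pointwise regularity of $g$, that rules out blowup.
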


Before we begin the proof of Proposition \ref{prop2}, we set up some notations.

\begin{align*}
\end{align*}
\noindent
\textbf{Notation}. Throughout the rest of this paper, unless explicitly mentioned,
we shall suppress the dependence of constants on the initial data or on the time $T$.
For example we shall write \eqref{ge48} simply as
\begin{align*}
\| \Phi(t) \|_{H_x^1(\mathbb R^5)} + \| \partial_t \Phi(t) \|_{L_x^2(\mathbb R^5)}
\lesssim 1, \quad \forall\, 0\le t <T.
\end{align*}

\begin{proof}[Proof of Proposition \ref{prop2}]

By \eqref{May14_eq24}, \eqref{May14_eq26}, we only need to show
\begin{align*}
\| \nabla \Phi (t)\|_{L_x^2(\mathbb R^5)} \lesssim 1, \quad\forall\, 0\le t<T.
\end{align*}

Let $\psi \in C_c^\infty(\mathbb R^5)$, $0\le \psi \le 1$ be a radial smooth
cut-off function such that $\psi(x) =1$ for $|x| \le \frac 12$ and $\psi(x) =0 $
for $|x|\ge 1$. Choose $r_0 \le \frac 12$ as in Lemma \ref{lem1} and define
\begin{align*}
\phi_{<r_0}(x) & = \psi\bigl( \frac  x {r_0} \bigr), \\
\phi_{>r_0}(x) & = 1- \phi_{<r_0}(x).
\end{align*}

By \eqref{ge36}--\eqref{ge38}, we have
\begin{align}
\square_5 \Phi &= \frac 1 {r^3} \phi_{\gtrsim 1} +
\frac 12 \int_0^{g(r,t)} \Bigl( B^{-\frac 12} - B^{-\frac 32} \Bigr) dy
+ \frac 32 \int_0^{g(r,t)} \Bigl( B^{\frac 32} -B^{\frac 12} \Bigr) dy \notag \\
& = \frac 1 {r^3} \phi_{\gtrsim 1}
+ \frac 1 2 \int_0^{g(r,t)} \Bigl( B^{-\frac 12} -B^{-\frac 32} \Bigr) dy
 + \frac 32 \phi_{>r_0} \int_0^{g(r,t)} \Bigl( B^{\frac 32} -B^{\frac 12} \Bigr) dy \notag \\
& \qquad +
\frac 32 \phi_{<r_0} \int_0^{g(r,t)}
\Bigl( 1+ \frac{2\sin^2(ry)} {r^2} \Bigr)^{\frac 12}
\cdot \frac{2\sin^2(ry)} {r^2} dy. \label{ge49}
\end{align}

Multiplying both sides of \eqref{ge49} by $\partial_t \Phi$ and integrating by parts, we obtain
\begin{align}
& \frac d {dt} \int_{\mathbb R^5}
\Bigl( \frac 12 (\partial_t \Phi)^2
+ \frac 1 2 |\nabla \Phi|^2 - \phi_{<r_0}(x)
\cdot G_1(r,g(r,t)) \Bigr) dx \notag \\
\lesssim & \| \partial_t \Phi \|_{L_x^2(\mathbb R^5)} \cdot
\Bigl(1+ \| g(t) \|_{L_x^2(\mathbb R^5)} \Bigr). \label{ge50}
\end{align}

Plugging \eqref{May14_eq24}, \eqref{ge40} into \eqref{ge50} and integrating in time, we get
\begin{align}
& \sup_{0\le t <T}
\int_{\mathbb R^5}
\Bigl( \frac 12 (\partial_t \Phi)^2 + \frac 12 |\nabla \Phi|^2
 - \phi_{<r_0}(x) \cdot G_1(r,g(r,t)) \Bigr) dx \lesssim 1. \label{ge53}
\end{align}

In particular, this yields
\begin{align}
\int_{\mathbb R^5}
\Bigl( \frac 1 2 |\nabla \Phi(t) |^2
-\phi_{<r_0}(x) \cdot G_1(r,g(r,t)) \Bigr) dx \lesssim 1,
\quad \forall\, 0\le t <T. \label{ge54}
\end{align}

Using Corollary \ref{cor1} and \eqref{May14_eq26}, we get
\begin{align}
\int_{\mathbb R^5} \Bigl( |\nabla \Phi(t)|^2
-\frac 9 4 \cdot \frac{|\Phi(t)|^2}{r^2} \Bigr) dx \lesssim 1,
\quad \forall\, 0\le t <T. \label{ge55}
\end{align}

By Hardy's inequality (Lemma \ref{lem_Hardy}), we have
\begin{align}
\int_{\mathbb R^5} \Bigl(
|\nabla \Phi(t)|^2 - \frac 9 4 \cdot \frac{|\Phi(t)|^2} {r^2}
\Bigr) dx \ge 0. \label{ge56}
\end{align}

There is no hope to obtain \eqref{ge48} by using only \eqref{ge55},
\eqref{ge56}, since there could possibly exist a sequence
$\Phi(t_n)$ with the property that
\begin{align*}
\| \nabla \Phi(t_n) \|_{L_x^2(\mathbb R^5)} \to \infty, \quad
\left\| \frac{\Phi(t_n)} {r} \right\|_{L_x^2(\mathbb R^5)} \to \infty,
\end{align*}
but
\begin{align*}
\int_{\mathbb R^5} \Bigl( |\nabla \Phi(t_n) |^2 - \frac 9 4 \cdot
\frac{|\Phi(t_n)|^2} {r^2} \Bigr) dx \to C_1, \quad \text{as $t_n\to
T$},
\end{align*}
where $C_1\ge 0$ is a finite constant.

Certainly a new argument is needed here.

To solve this problem, we shall proceed by exploiting in more detail the structure of $\Phi$.

Assume \eqref{ge48} does not hold. By \eqref{May14_eq24} and \eqref{May14_eq26}, there exists $t_n \to T$ such that
\begin{align}
\lim_{n\to \infty} \| \nabla \Phi(t_n) \|_{L_x^2(\mathbb R^5)} =+\infty. \label{ge58}
\end{align}

Define
\begin{align}
\tilde \Phi(t_n) = \frac{\Phi(t_n)} { \left\| \nabla \Phi(t_n) \right\|_{L_x^2(\mathbb R^5)} }. \label{ge59a}
\end{align}

Then
\begin{align}
\left\| \nabla \tilde \Phi(t_n) \right\|_{L_x^2(\mathbb R^5)} =1, \label{ge59}
\end{align}
and by \eqref{ge58}, \eqref{ge55}, \eqref{ge56},
\begin{align}
\left\| \frac{\tilde \Phi(t_n) } {r} \right\|_{L_x^2(\mathbb R^5)} \to \frac 23,
\quad \text{as $t_n \to T$}. \label{ge60}
\end{align}

Next consider \eqref{ge38}. If $r\gtrsim 1$, then
\begin{align*}
| \partial_r \Phi(r,t) | \lesssim |\partial_r g| + \left| \frac g r \right| +
\left| \frac 1 {r^3} \phi_{\gtrsim 1}(r) \right|.
\end{align*}
Therefore by \eqref{ge40} and \eqref{May14_eq26},
\begin{align}
\left\| \nabla \Phi(t) \right\|_{L_x^2(|x|>\frac 12,\; x\in \mathbb R^5)}
+\left\| \frac 2 r \Phi(t) \right\|_{L_x^2(|x|>\frac 12,\; x\in \mathbb R^5)}
 \lesssim 1.
\label{ge61}
\end{align}

For $r\le \frac 12$, by \eqref{ge38} and a short computation, we have
\begin{align}
&\partial_r \Phi(r,t) + \frac 2 r \Phi (r,t) \notag \\
 =&  \left( 1+ \frac{2\sin^2 f} {r^2} \right)^{\frac 12}
\cdot \frac{\partial_r f} r
 + \frac 1 r \int_0^{g(r,t)} \left( 1+ \frac{2\sin^2(ry)} {r^2} \right)^{-\frac 12} dy. \label{ge62}
\end{align}

By \eqref{ge11},
\begin{align*}
\left\| \left(1+ \frac{2\sin^2 f} {r^2} \right)^{\frac 12} \cdot \frac{\partial_r f} r
\right\|_{L_x^2(\mathbb R^5)} \lesssim 1.
\end{align*}

Hence \eqref{ge61}, \eqref{ge62} gives
\begin{align}
\left\| \partial_r \Phi(t) + \frac 2 r \Phi(t)
\right\|_{L_x^2(\mathbb R^5)} \lesssim 1, \quad \forall\, 0\le t <T. \label{ge63}
\end{align}

By \eqref{ge58}, \eqref{ge59a}, \eqref{ge63}, we obtain
\begin{align*}
\left\|
\partial_r \tilde \Phi (t_n)
+ \frac 2 r \tilde \Phi(t_n) \right\|_{L_x^2(\mathbb R^5)} \to 0,
\quad \text{as $t_n\to T$}.
\end{align*}
But this contradicts  \eqref{ge59} and \eqref{ge60}.
\end{proof}

\begin{rem}
In the above derivation, the contradiction (blow-up) argument is actually not
needed. One can directly use \eqref{ge55} and \eqref{ge63} to get the desired uniform
bound on $\| \partial_r \Phi(t) \|_{L_x^2(\mathbb R^5)}$. We thank the anonymous
referee for pointing this out.
\end{rem}

\section{Nonlinear energy bootstrap: more estimates}

Let $T>0$ be the same as in Proposition \ref{prop2}. Our goal in this
section is to prove
\begin{align}
\sum_{|\alpha|+|\beta|\le 4}
\left\| \partial_x^\alpha \partial_t ^\beta \Phi(t) \right\|_{L_x^2(\mathbb R^5)} \lesssim 1,
\quad \forall\, 0\le t <T, \label{he1}
\end{align}
and eventually
\begin{align}
\sup_{0\le t <T} G(t) <\infty, \label{he2}
\end{align}
where $G(t)$ is defined in \eqref{ge10}. By Proposition \ref{prop1}, this implies
global wellposedness.

We shall prove \eqref{he1} in several steps.

First we get some decay estimates of $\Phi$ and $g$.

By Proposition \ref{prop2} and radial Sobolev embedding, we have
\begin{align}
|\Phi(r,t)| \lesssim \min \left\{
r^{-\frac 32}, \, r^{-2} \right\},
\quad\forall\, r>0,\; 0\le t<T. \label{he3}
\end{align}

We claim that
\begin{align}
|g(r,t)| \lesssim
\min \left\{ r^{-\frac 34}, \; r^{-2} \right\},
\quad\forall\, r>0, \; 0\le t<T. \label{he4}
\end{align}

By \eqref{ge33}, it suffices to prove
\begin{align*}
|g(r,t)| \lesssim r^{-\frac 34}, \quad \forall\, 0<r\ll 1, \; 0\le t<T.
\end{align*}

For $r\ll 1$, \eqref{ge38} gives
\begin{align}
\Phi(r,t) = \int_0^{g(r,t)}
\left( 1+ \frac{2\sin^2(ry)} {r^2} \right)^{\frac 12} dy. \label{he5}
\end{align}

Suppose for some $0<r\ll 1$, $|g(r,t)| \gtrsim \frac 1r$, then clearly
\begin{align*}
\Phi(r,t) \sim \frac g r.
\end{align*}
By \eqref{he3}, this would imply
\begin{align*}
|g(r,t)| \lesssim r^{-\frac 12},
\end{align*}
which contradicts  the assumption $|g(r,t)| \gtrsim \frac 1r$.

Therefore $|g(r,t)| \lesssim \frac 1r$ for all $r\ll 1$. By \eqref{he5}, we get
\begin{align*}
|\Phi(r,t)| \sim \left|\int_0^{g(r,t)} (1+y^2)^{\frac 12} dy \right| \gtrsim g(r,t)^2.
\end{align*}

Hence by \eqref{he3},
\begin{align*}
g(r,t)^2 \lesssim r^{-\frac 32}, \quad \forall\, 0<r\ll 1, \; 0\le t<T.
\end{align*}

Therefore \eqref{he4} is proved.

Before we continue, we need to introduce standard Strichartz for the wave operator.

\begin{defn}
Let $d\ge 2$. A pair $(q,r)$ is said to be wave admissible  if
\begin{align*}
2\le q\le \infty, \quad 2 \le r <\infty, \\
\text{and} \qquad \frac 1 q + \frac{d-1}{2r} \le \frac{d-1} 4.
\end{align*}
Note that the case $(q,r,d)=(2,\infty,3)$ is not admissible.
\end{defn}

\begin{lem} \label{lem_S1}
Let  $d\ge 2$.  Suppose $u: \; [0,T] \times \mathbb R^d \to \mathbb R$ solves
\begin{align*}
\begin{cases}
\partial_{tt} u -\Delta u = F, \\
(u,\partial_t u ) \Bigr|_{t=0} = (u_0, u_1).
\end{cases}
\end{align*}
Let $(q,r)$, $(\tilde q, \tilde r)$ be wave admissible and satisfy the scaling condition
\begin{align*}
\frac 1 q + \frac d r = \frac d 2 - \gamma = \frac 1 {\tilde q^\prime}
+ \frac d {\tilde r^\prime} -2.
\end{align*}
Then on the space-time slab $[0,T]\times \mathbb R^d$, we have
\begin{align*}
& \| u \|_{L_t^q L_x^r} + \| u \|_{C_t \dot H_x^\gamma} +
\| \partial_t u \|_{C_t \dot H_x^{\gamma-1}} \\
\lesssim & \|u_0 \|_{\dot H_x^\gamma}
+ \| u_1\|_{\dot H_x^{\gamma-1}}
+ \| F \|_{L_t^{\tilde q^\prime} L_x^{\tilde r^\prime}}.
\end{align*}
Here $(\tilde q^\prime, \tilde r^\prime)$ are the conjugates of $(\tilde q,\tilde r)$, i.e.
$\frac 1 {\tilde q}+ \frac 1 {\tilde q^\prime} = \frac 1 {\tilde r}
+ \frac 1 {\tilde r^\prime} =1$.
\end{lem}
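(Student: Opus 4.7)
The plan is to establish Lemma \ref{lem_S1} by the standard Keel--Tao machinery, reducing the Strichartz inequality to the dispersive estimate for the free half-wave propagator via a $TT^*$ argument, and handling non-sharp admissible exponents by the Christ--Kiselev lemma. The statement is a classical result and the proof should be viewed as a bookkeeping of well-known ingredients; we only sketch the structure.

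First I would write the solution via Duhamel,
\begin{align*}
u(t) = \cos(t\sqrt{-\Delta})\, u_0 + \frac{\sin(t\sqrt{-\Delta})}{\sqrt{-\Delta}}\, u_1 + \int_0^t \frac{\sin((t-s)\sqrt{-\Delta})}{\sqrt{-\Delta}}\, F(s)\, ds,
\end{align*}
so that it suffices to treat the operator $U(t)=e^{it\sqrt{-\Delta}}$ acting on $\dot H^\gamma$-data, and then reassemble. The $\dot H^\gamma$ and $\dot H^{\gamma-1}$ norms on the left-hand side follow immediately from the spectral calculus (for the homogeneous part) and from Minkowski's inequality in time combined with the same spectral bound (for the inhomogeneous part, using the dual pair $(\tilde q^\prime,\tilde r^\prime)$ satisfies the gap condition shifted by $2$). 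So the real work is in the mixed-norm $L_t^q L_x^r$ piece.

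Next I would recall the fundamental dispersive estimate: for $f$ frequency-localized near $|\xi|\sim 1$,
\begin{align*}
\bigl\| U(t) f \bigr\|_{L_x^\infty(\mathbb R^d)} \lesssim (1+|t|)^{-(d-1)/2}\, \|f\|_{L_x^1(\mathbb R^d)},
\end{align*}
which comes from stationary phase applied to the oscillatory integral representation of $U(t)$. Together with the trivial $L^2\to L^2$ bound, Riesz--Thorin interpolation gives the frequency-localized decay
\begin{align*}
\bigl\| U(t) P_1 f \bigr\|_{L_x^r} \lesssim (1+|t|)^{-(d-1)(\tfrac12-\tfrac1r)}\, \|f\|_{L_x^{r^\prime}}.
\end{align*}
A scaling/Littlewood--Paley argument then upgrades this to the frequency-invariant statement for a derivative of the appropriate order, matching the gap condition. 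Feeding this into the $TT^*$ identity of Keel--Tao yields, for any sharp wave-admissible pair $(q,r)$,
\begin{align*}
\bigl\| U(t) f \bigr\|_{L_t^q L_x^r} \lesssim \|f\|_{\dot H^\gamma},
\end{align*}
with the exponents related by $\tfrac1q+\tfrac dr=\tfrac d2-\gamma$.

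The main obstacle, such as it is, is the non-sharp endpoint. For genuinely non-sharp admissible pairs (strict inequality in $\tfrac1q+\tfrac{d-1}{2r}<\tfrac{d-1}{4}$), the homogeneous estimate is straightforward, but the inhomogeneous estimate for a pair $((q,r),(\tilde q,\tilde r))$ with distinct exponents requires the Christ--Kiselev lemma to upgrade the untruncated bound
\begin{align*}
\Bigl\| \int_{\mathbb R} \frac{\sin((t-s)\sqrt{-\Delta})}{\sqrt{-\Delta}} F(s)\, ds \Bigr\|_{L_t^q L_x^r} \lesssim \|F\|_{L_t^{\tilde q^\prime} L_x^{\tilde r^\prime}}
\end{align*}
(which follows from composing dual homogeneous estimates) to the retarded integral $\int_0^t$, and this requires $\tilde q^\prime < q$ (strict), which holds in the non-sharp range. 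Combining the homogeneous and inhomogeneous bounds and summing over the three pieces of the Duhamel formula yields the estimate claimed in the lemma on $[0,T]\times \mathbb R^d$.
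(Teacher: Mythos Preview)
Your sketch is correct and follows the standard Keel--Tao route to Strichartz estimates for the wave equation. The paper itself does not supply a proof of this lemma: it is stated as a standard tool (introduced as ``standard Strichartz for the wave operator'') and used as a black box in the subsequent bootstrap argument, so there is nothing to compare against beyond noting that your outline is the canonical one.
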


To simplify the presentation, we introduce more

\noindent
\textbf{Notations}. For any $z\in \mathbb R^d$, we use the Japanese bracket notation $\langle z\rangle :=(1+|z|^2)^{\frac 12}$.
For any space-time slab $[0,T_1]\times \mathbb R^5$, we shall use
the notation
\begin{align*}
\| u \|_{L_t^q L_x^r([0,T_1])}
\end{align*}
to denote
\begin{align*}
\| u \|_{L_t^q L_x^r([0,T_1]\times \mathbb R^5)}.
\end{align*}

We will need to use the standard Littlewood-Paley projection operators.
Let $\tilde \phi \in C_c^\infty(\mathbb R^5)$ be a radial bump function supported in the ball
$\{ x\in \mathbb R^5:\; |x| \le \frac{25}{24} \}$ and equal to one on the ball
$\{ x\in \mathbb R^5: \; |x| \le 1\}$. For any constant $C>0$, denote
$\tilde \phi_{\le C}(x) := \tilde \phi(\frac x C)$ and $\tilde \phi_{>C} := 1- \tilde \phi_{\le C}$.
For each dyadic $N>0$, define the Littlewood-Paley projectors
\begin{align*}
\widehat{P_{\le N} f}(\xi) & := \tilde \phi_{\le N}(\xi) \hat f(\xi), \\
\widehat{P_{>N} f }(\xi) & := \tilde \phi_{>N}(\xi) \hat f(\xi), \\
\widehat{P_N f} (\xi) & := (\tilde \phi_{\le N} - \tilde \phi_{\le \frac N2} ) \hat f(\xi),
\end{align*}
and similarly $P_{<N}$ and $P_{\ge N}$.

Now we are ready to continue our estimates.

Taking the time derivative on both sides of \eqref{ge36}, we get
\begin{align}
\square_5 (\partial_t \Phi) & = -\frac 32 \partial_t \Phi+ \frac 32 A^{\frac 32} \partial_t g
+ \frac 12 (A^{-\frac 12} - A^{-\frac 32} ) \partial_t g , \label{he6}
\end{align}
where
\begin{align}
A= 1 + \frac{2 \sin^2( rg(r,t)+\phi(r) )} {r^2}. \label{he7}
\end{align}

By \eqref{he4}, we have
\begin{align}
|A-1| \lesssim \min\left\{ r^{-\frac 32}, \; r^{-4} \right\}. \label{he8}
\end{align}

From \eqref{ge38}, one has
\begin{align}
\partial_t \Phi = A^{\frac 12} \partial_t g. \label{he9}
\end{align}

Substituting \eqref{he9} into \eqref{he6}, we get
\begin{align}
\square_5 ( \partial_t \Phi)
=\Bigl( \frac 32 + \frac 12 A^{-2} \Bigr) (A-1) \partial_t \Phi. \label{he9a}
\end{align}

By Strichartz (Lemma \ref{lem_S1}) and \eqref{he8}, we have for any
$0<T_1<T$,
\begin{align}
\left\| P_{\ge 1} \partial_t \Phi
\right\|_{L_t^3 L_x^3 ([0,T_1])} &
\lesssim \left \| P_{\ge 1} \partial_t \Phi(0) \right\|_{\dot H_x^{\frac 12} }
 + \left\| P_{\ge 1} \partial_{tt} \Phi(0)
\right \|_{\dot H_x^{-\frac 12} } \notag \\
& \qquad + \left\| (A-1) \partial_t \Phi \right \|_{L_t^{\frac 32} L_x^{\frac 32} ([0,T_1])} \notag \\
& \lesssim 1 + \| (A-1) \|_{L_t^3L_x^3([0,T_1])} \cdot
\| \partial_t \Phi \|_{L_t^3 L_x^3([0,T_1])} \notag \\
& \lesssim 1+ T_1^{\frac 13} \| \partial_t \Phi \|_{L_t^3 L_x^3 ([0,T_1])}. \label{he10}
\end{align}

Obviously
\begin{align}
\| P_{<1} \partial_t \Phi
\|_{L_t^3 L_x^3([0,T_1])} \lesssim \| \partial_t \Phi\|_{L_t^{\infty}L_x^2} \lesssim 1. \label{he11}
\end{align}

Using \eqref{he10}, \eqref{he11}, a continuity argument (see Appendix
\ref{S:cont}) yields
\begin{align}
\left \| \partial_t \Phi \right \|_{L_t^3 L_x^3 ([0,T)) } \lesssim 1. \label{he12}
\end{align}

Therefore
\begin{align}
\left \| \partial_t \Phi \right \|_{L_t^\infty \dot H_x^{\frac 12} ([0,T))} \lesssim 1. \label{he13}
\end{align}

Using \eqref{he9a}, we have
\begin{align}
\square_5 (\partial_{tt} \Phi )
& = \left( \frac 32 + \frac 12 A^{-2} \right) (A-1) \partial_{tt} \Phi \notag \\
& \qquad + ( -\frac 12 A^{-2} + A^{-3} + \frac 32 ) \partial_t A \partial_t \Phi. \label{he13a}
\end{align}

By \eqref{he7}, observe that
\begin{align}
|\partial_t A| \lesssim |\partial_t \Phi|. \label{he13aa}
\end{align}

Therefore by \eqref{he12},
\begin{align*}
 & \left \| ( -\frac 12 A^{-2} +A^{-3} + \frac 32 ) \partial_t A \partial_t \Phi \right\|_{L_t^{\frac 32} L_x^{\frac 32}([0,T))} \\
 \lesssim & \left \| \partial_t \Phi \right\|_{L_t^3 L_x^3([0,T))}^2 \lesssim 1.
 \end{align*}

Denote
\begin{align}
G_3(r,t) = \frac 12 \int_0^{g(r,t)}
\Bigl( 3B^{\frac 32} + B^{-\frac 12} -B^{-\frac 32} \Bigr) dy. \label{he16a}
\end{align}

Then by \eqref{ge37}, \eqref{he4}, we have
\begin{align}
|G_3(r,t)| \lesssim
\begin{cases}
|g(r,t)|, \qquad \text{if $r\gtrsim 1$}, \\
|\Phi(r,t)|^2 + |\Phi(r,t)|, \qquad \text{if $r\ll 1$}.
\end{cases} \label{he16}
\end{align}

Hence by \eqref{ge48},
\begin{align}
\left\| P_{<1} G_3(t) \right\|_{L_x^2(\mathbb R^5)} \lesssim 1, \quad \forall\, 0\le t <T. \label{he16b}
\end{align}

 By essentially repeating the derivation of \eqref{he12}, \eqref{he13} with $\partial_t \Phi$ replaced by
 $\partial_{tt} \Phi$, we get
 \begin{align}
 \left \| \partial_{tt} \Phi \right \|_{L_t^\infty  H_x^{\frac 12} ([0,T))} \lesssim 1. \label{he14}
 \end{align}
 Note that the low frequency part of $\partial_{tt} \Phi$ causes no trouble since it
 can be controlled by $\| P_{\le 1} \Delta \Phi\|_{L_x^2} \lesssim \| \Phi\|_{L_x^2}$ using
 equation \eqref{ge36} together with \eqref{he16b}.

Now by \eqref{ge36}, we have
\begin{align}
-\Delta \Phi = -\partial_{tt} \Phi+ \frac 1 {r^3} \phi_{\gtrsim 1} -\frac 32 \Phi + G_3, \label{he15}
\end{align}
where $G_3(r,t)$ was already defined in \eqref{he16a}.

By \eqref{he16} and \eqref{ge40},
\begin{align}
\left\| G_3(t) \right \|_{L_x^2(|x|>\frac 14,\; x\in \mathbb R^5)} \lesssim 1, \quad \forall\,
0\le t <T. \label{he17}
\end{align}

By \eqref{he16} and \eqref{ge48}, we get
\begin{align}
\left\| \phi_{\le \frac 12} G_3(t)
\right\|_{L_x^{\frac 53}(\mathbb R^5)}
\lesssim 1,
\quad \forall\, 0\le t <T, \label{he18}
\end{align}
where $\phi_{\le \frac 12}$ is a smooth cut-off function localized to $r\le \frac 12$.

By \eqref{he14}, \eqref{he15}, \eqref{he17} and \eqref{he18}, we have
\begin{align*}
\left\| P_{>1} |\nabla|^{-\frac 12} \Delta \Phi (t)
\right\|_{L_x^2 (\mathbb R^5)} & \lesssim \left \| P_{>1} \partial_{tt} \Phi(t) \right \|_{\dot H_x^{\frac 12} (\mathbb R^5)}
 + 1 + \left\| |\nabla|^{-\frac 12} P_{>1} G_3 \right \|_{L_x^2(\mathbb R^5)} \\
&  \lesssim 1 + \left \| (1-\phi_{\le \frac 12} ) G_3(t) \right \|_{L_x^2 (\mathbb R^5)}
 + \left \| \phi_{\le \frac 12} G_3(t) \right \|_{L_x^{\frac 53} (\mathbb R^5) } \\
& \lesssim 1, \qquad \forall \, 0\le t<T.
\end{align*}

Hence
\begin{align*}
\left \| |\nabla|^{\frac 32} \Phi(t) \right\|_{L_x^2(\mathbb R^5)} \lesssim 1, \qquad \forall\, 0\le t<T.
\end{align*}
By Sobolev embedding,
\begin{align}
\| \Phi (t)\|_{L_x^5 (\mathbb R^5)} \lesssim 1, \qquad\forall\, 0\le t<T. \label{he19}
\end{align}

By \eqref{he16}, \eqref{he17}, \eqref{he19}, we get
\begin{align*}
\left\| G_3(t) \right\|_{L_x^2 (\mathbb R^5)} \lesssim 1, \quad\forall\, 0\le t<T.
\end{align*}

Hence by \eqref{he15} and \eqref{he14}, we obtain
\begin{align}
\left \| \Phi(t) \right \|_{H_x^2 (\mathbb R^5)} \lesssim 1, \qquad\forall\, 0\le t <T. \label{he20}
\end{align}

By radial Sobolev embedding, we have
\begin{align*}
\left\| r^{\frac 12} \Phi(t) \right\|_{L_x^\infty} \lesssim
\left\| \Delta \Phi \right \|_{L_x^2} \lesssim 1.
\end{align*}

Therefore \eqref{he3}, \eqref{he4} and \eqref{he8} can be refined to
\begin{align}
|\Phi(r,t)| & \lesssim \min \left\{ r^{-\frac 12}, \, r^{-2} \right\}, \label{he21} \\
|g(r,t) |& \lesssim \min \left\{ r^{-\frac 14}, \, r^{-2} \right\}, \label{he22} \\
|A-1| & \lesssim \min\left\{ r^{-\frac 12}, \, r^{-4} \right\}. \label{he23}
\end{align}

By \eqref{he13a}, \eqref{he13aa} and Strichartz, we have for any $T_1<T$,
\begin{align}
& \left\| \partial_{tt} \Phi \right\|_{L_t^\infty \dot H_x^1([0,T_1))}
+ \left\| \partial_{ttt} \Phi \right \|_{L_t^\infty L_x^2 ([0,T_1])}  \notag \\
\lesssim & \left\| \partial_{tt} \Phi(0) \right\|_{\dot H_x^1} +
\left\| \partial_{ttt} \Phi(0) \right \|_{L_x^2}
 + \left\| (A-1) \partial_{tt} \Phi \right \|_{L_t^1 L_x^2([0,T_1])} \notag \\
& \qquad + \left \| \partial_t A \cdot \partial_t \Phi \right\|_{L_t^1 L_x^2([0,T_1])} \notag \\
\lesssim & 1 + T_1 \left \| (A-1 ) \right\|_{L_t^\infty L_x^5([0,T_1])}
\cdot \left\| \partial_{tt} \Phi \right \|_{L_t^\infty \dot H_x^1([0,T_1])} \notag \\
& \qquad + \left \| \partial_t \Phi \right \|^2_{L_t^2 L_x^4 ([0,T_1])}. \label{he24}
\end{align}

By \eqref{he23},
\begin{align}
\left\| (A-1) \right \|_{L_t^\infty L_x^5} \lesssim 1. \label{he25}
\end{align}

By \eqref{he9a}, \eqref{he23} and Strichartz, it is not difficult to check that
\begin{align}
\left \| \partial_t \Phi \right\|_{L_t^2 L_x^4([0,T))}
+\left\| \partial_t \Phi \right\|_{L_t^\infty \dot H_x^1([0,T))}
\lesssim 1. \label{he26}
\end{align}

Plugging \eqref{he25}, \eqref{he26} into \eqref{he24}, a simple continuity argument
then shows that
\begin{align}
\left\| \partial_{tt} \Phi
\right \|_{L_t^\infty \dot H_x^1([0,T))}
+ \left\| \partial_{ttt} \Phi
\right\|_{L_t^\infty L_x^2([0,T))} \lesssim 1.
\label{he27}
\end{align}

By \eqref{he9a}, \eqref{he27}, \eqref{he23} and Hardy's inequality, we then have
\begin{align}
\left\| \partial_t \Delta \Phi
\right\|_{L_t^\infty L_x^2([0,T))}
& \lesssim \left \| \partial_{ttt} \Phi \right \|_{L_t^\infty L_x^2([0,T))}
 + \left\| (A-1) \partial_t \Phi \right \|_{L_t^\infty L_x^2([0,T))} \notag \\
& \lesssim 1+ \left \| \nabla \partial_t \Phi \right \|_{L_t^\infty L_x^2 ([0,T))} \notag \\
& \lesssim 1. \label{he28}
\end{align}

We can write \eqref{he27}, \eqref{he28} collectively as
\begin{align}
 \left\| \partial_{ttt} \Phi \right\|_{L_t^\infty L_x^2([0,T))}
+ \left \| \partial_{tt} \nabla \Phi \right\|_{L_t^\infty L_x^2([0,T))}
+ \left\| \partial_t \Delta \Phi
\right \|_{L_t^\infty L_x^2([0,T))} \lesssim 1. \label{he29}
\end{align}

By \eqref{ge36} and \eqref{he29}, we have
\begin{align}
 & \left\| \nabla \Delta \Phi \right\|_{L_t^\infty L_x^2([0,T))} \notag \\
 \lesssim & 1+ \left\| \partial_r
 \left( \int_0^{g(r,t)}
 \Bigl( 3B^{\frac 32} + B^{-\frac 12} -B^{-\frac 32} \Bigr) dy \right) \right\|_{L_t^\infty L_x^2 ([0,T))} \notag \\
 \lesssim & 1+ \left\| A^{\frac 32} \partial_r g \right \|_{L_t^\infty L_x^2([0,T))} \notag \\
 & \qquad + \left\|
 \int_0^{g(r,t)}
 \Bigl( \frac 9 2 B^{\frac 12} - \frac 12 B^{-\frac 32} + \frac 32 B^{-\frac 52} \Bigr)
 \partial_r B dy \right\|_{L_t^\infty L_x^2 ([0,T))}. \label{he30}
 \end{align}

 Observe that for $r\le \frac 12$,
 \begin{align*}
 |(\partial_r B)(r,y)| \lesssim |y|^3.
 \end{align*}
 Therefore by \eqref{he22},
 \begin{align}
 & \left \| \int_0^{g(r,t)}
 \Bigl( \frac 9 2 B^{\frac 12} - \frac 12 B^{-\frac 32} + \frac 32 B^{-\frac 52} \Bigr)
 \partial_r B dy \right\|_{L_t^\infty L_x^2 ([0,T))} \notag \\
 \lesssim & \;\| g\|^5_{L_t^\infty L_x^{10}([0,T))}
 + \| g \|_{L_t^\infty L_x^2([0,T))} \notag \\
 \lesssim &\; 1. \label{he31}
 \end{align}

 On the other hand, by \eqref{he22}, \eqref{he7} and \eqref{ge40},
 \begin{align}
  \left\| A^{\frac 32} \partial_r g \right \|_{L_t^\infty L_x^2([0,T))}
 \lesssim & \left \| \partial_r g \right \|_{L_t^\infty L_x^2([0,T))}
 + \left\| \phi_{<\frac 12} r^{-\frac 34} \partial_r g \right \|_{L_t^\infty L_x^2([0,T))} \notag \\
 \lesssim & 1 + \left\| \phi_{<\frac 12}
 \cdot r^{-\frac 34} \partial_r g \right \|_{L_t^\infty L_x^2([0,T))}. \label{he32}
 \end{align}

 Plugging \eqref{he31}, \eqref{he32} into \eqref{he30}, we get
 \begin{align}
 & \left\| \nabla \Delta \Phi \right \|_{L_t^\infty L_x^2([0,T))}
 \lesssim  1 +
 \left\| \phi_{< \frac 12} \cdot r^{-\frac 34} \cdot \partial_r g
 \right\|_{L_t^\infty L_x^2([0,T))}. \label{he33}
 \end{align}

 By \eqref{ge62}, \eqref{he21}, \eqref{he22} and Hardy's inequality
 (see Appendix \ref{S:add1}), we have
 \begin{align*}
 \left\| \phi_{< \frac 12} \cdot r^{-\frac 34} \cdot \partial_r g
 \right\|_{L_t^\infty L_x^2([0,T))}
 & \lesssim 1+ \left\| \phi_{<\frac 12} \cdot r^{-\frac 34} \partial_r \Phi \right\|_{L_t^\infty L_x^2} \\
 & \lesssim 1 + \left\| \frac 1 r \nabla \Phi \right\|_{L_t^\infty L_x^2} \\
 & \lesssim 1.
 \end{align*}

 Substituting it into \eqref{he33}, we get
 \begin{align*}
 \left\| \nabla \Delta \Phi \right \|_{L_t^\infty L_x^2([0,T))} \lesssim 1.
 \end{align*}

 Hence together with \eqref{he29}, we have
 \begin{align}
 &\left\| \partial_{ttt} \Phi \right\|_{L_t^\infty L_x^2 ([0,T))} + \left\| \partial_{tt} \nabla \Phi \right
 \|_{L_t^\infty L_x^2 ([0,T))} \notag \\
 & \qquad + \left\|
 \partial_t \Delta \Phi \right\|_{L_t^\infty L_x^2([0,T))}
 + \left\| \nabla \Delta \Phi \right \|_{L_t^\infty L_x^2([0,T))} \lesssim 1. \label{he34}
 \end{align}

By Sobolev embedding we get
\begin{align*}
\left\| \Phi \right \|_{L_t^\infty L_x^\infty([0,T))} \lesssim 1.
\end{align*}

Therefore we refine \eqref{he21}, \eqref{he22}, \eqref{he23} to
\begin{align}
|\Phi(r,t) | & \lesssim \langle r \rangle^{-2}, \label{he35} \\
|g(r,t) | & \lesssim \langle r \rangle^{-2}, \label{he36} \\
|A-1| & \lesssim \langle r\rangle^{-4}. \label{he37}
\end{align}

By \eqref{ge62} (see Appendix \ref{S:add1}), we get
\begin{align}
\left\| \partial_r g \right\|_{L_t^\infty L_x^4 ([0,T))} \lesssim 1. \label{he38}
\end{align}

By \eqref{he9} and \eqref{he34}, we have
\begin{align}
\| \partial_t g \|_{L_t^\infty L_x^4([0,T))} + \left\|
\partial_{tt} g \right \|_{L_t^\infty L_x^2 ([0,T))} \lesssim 1. \label{he39}
\end{align}

Using \eqref{he38}, \eqref{he39} and \eqref{ge8}, we obtain
\begin{align}
\left\| \Delta g \right \|_{L_t^\infty L_x^2([0,T))} \lesssim 1. \label{he40}
\end{align}
Also by Hardy's inequality we get $\|\frac 1r \partial_r g \|_{L_t^\infty L_x^2([0,T))} \lesssim 1$ and hence
\begin{align}
\| \partial_{rr} g \|_{L_t^\infty L_x^2([0,T))} \lesssim 1. \label{he41a}
\end{align}

By \eqref{he7}, \eqref{he36}, \eqref{he38}, \eqref{he40}, \eqref{he41a}, it follows that
\begin{align}
\left \| \nabla A \right \|_{L_t^\infty L_x^4 ([0,T))}
+ \left\| \Delta A \right \|_{L_t^\infty L_x^2([0,T))} \lesssim 1. \label{he41}
\end{align}
Also it is not difficult to check that
\begin{align}
\left\|
\Delta \left(
\int_0^{g(r,t)}
\Bigl( 3B^{\frac 32} + B^{-\frac 12} - B^{-\frac 32} \Bigr) dy \right)
\right\|_{L_t^\infty L_x^2([0,T))} \lesssim 1. \label{he42}
\end{align}

From \eqref{he7}, \eqref{he9} and \eqref{he34}, we get
\begin{align}
\left\| \partial_{tt} A \right \|_{L_t^\infty L_x^2([0,T))}
+ \left\| \partial_{tt} A \right \|_{L_t^\infty L_x^{\frac{10}3}([0,T))} \lesssim 1.
\label{he43}
\end{align}

Differentiating \eqref{he13a} in time, we get
\begin{align}
\square_5 \bigl( \partial_{ttt} \Phi \bigr)& =
\Bigl( \frac 32 A- \frac 32 + \frac 12 A^{-1} - \frac 1 2 A^{-2} \Bigr) \partial_{ttt} \Phi \notag \\
& \qquad + (2A^{-3} -A^{-2} +3) \partial_t A \cdot \partial_{tt} \Phi \notag \\
& \qquad + (A^{-3} -\frac 12 A^{-2} +\frac 32) \partial_{tt}A \partial_t \Phi \notag \\
& \qquad + (-3 A^{-4} + A^{-3} ) (\partial_t A)^2 \partial_t \Phi. \label{he44}
\end{align}

By Strichartz, \eqref{he34}, \eqref{he43} and Sobolev, we get,
\begin{align}
 & \left\| \partial_{ttt} \Phi \right \|_{L_t^\infty \dot H_x^1([0,T))}
 + \left\| \partial_{tttt} \Phi \right \|_{L_t^\infty L_x^2([0,T))} \notag\\
 \lesssim & \left\| \partial_{ttt} \Phi(0) \right\|_{\dot H_x^1 (\mathbb R^5)}
 + \left\| \partial_{tttt} \Phi(0) \right \|_{L_x^2 (\mathbb R^5)} \notag \\
 & \qquad + \left\| (A-1) \partial_{ttt} \Phi \right\|_{L_t^1 L_x^2([0,T))}
 + \left\| \partial_t \Phi \cdot \partial_{tt} \Phi \right \|_{L_t^1 L_x^2([0,T))} \notag\\
 & \qquad + \left\| \partial_{tt} A \cdot \partial_t \Phi \right \|_{L_t^1 L_x^2([0,T))}
 + \left\| \partial_t \Phi \right\|^3_{L_t^3 L_x^6([0,T])} \notag \\
 \lesssim & 1.
 \label{he45}
 \end{align}

 By \eqref{he45}, \eqref{ge36}, we get
 \begin{align}
   \left\| \partial_{tt} \Delta \Phi \right \|_{L_t^\infty L_x^2([0,T))}
 & \lesssim \, 1+ \left\| \partial_t \Bigl( (3A^2+A^{-1} -A^{-2} ) \partial_t \Phi \Bigr)
  \right\|_{L_t^\infty L_x^2([0,T))} \notag \\
 & \lesssim \, 1. \label{he46}
  \end{align}

  Using \eqref{ge36} again with the estimates \eqref{he46} and  \eqref{he42}, we finally obtain
  \begin{align*}
  \left\| \Delta^2 \Phi \right \|_{L_t^\infty L_x^2([0,T))} \lesssim 1.
  \end{align*}

  In a similar way we have the estimate
  \begin{align*}
  \left\| \partial_t \nabla \Delta \Phi \right\|_{L_t^\infty L_x^2([0,T))} \lesssim 1.
  \end{align*}

  Hence we have established
  \begin{align}
  & \left\| \partial_{ttt} \Phi \right\|_{L_t^\infty H_x^1([0,T))}
  + \left\| \partial_{tttt} \Phi \right \|_{L_t^\infty L_x^2([0,T))} \notag \\
  & \qquad + \left \| \partial_{tt} \Phi \right\|_{L_t^\infty H_x^2([0,T))}
  + \left\| \Phi \right \|_{L_t^\infty H_x^4([0,T))} \notag \\
  & \qquad + \left\| \partial_t \Phi \right\|_{L_t^\infty H_x^3([0,T))} \lesssim 1.
  \label{he48}
  \end{align}
  This proved \eqref{he1}.

  We are now ready to prove \eqref{he2}.

  By \eqref{he36}
  \begin{align}
  \left\| \langle x \rangle g(t) \right\|_{L_t^\infty L_x^\infty([0,T))} \lesssim 1. \label{he49}
  \end{align}

  By \eqref{he9}, \eqref{he48}, Sobolev embedding and radial Sobolev embedding, we have
  \begin{align}
  \left\| \langle x \rangle \partial_t g \right\|_{L_t^\infty L_x^\infty([0,T))}
  & \lesssim \left\| \langle x \rangle \partial_t \Phi \right \|_{L_t^\infty L_x^\infty([0,T))} \notag \\
  & \lesssim \left \| \partial_t \Phi \right \|_{L_t^\infty H_x^3([0,T))} \notag \\
  & \lesssim 1. \label{he50}
  \end{align}

  In a similar way, by using \eqref{ge62}, we get
  \begin{align}
  \left\| \langle x \rangle \partial_r g \right \|_{L_t^\infty L_x^\infty([0,T))} \lesssim 1.
  \label{he51}
  \end{align}

  Now \eqref{he2} clearly follows from \eqref{he49}--\eqref{he51}.

\appendix

\section{Some technical estimates}
In this appendix we collect some useful technical estimates. Some of these estimates
are rather pedestrian. Nevertheless,  we include all
the details here for the sake of completeness.

The following radial Sobolev embedding is well-known and dates back 
to Strauss \cite{Strauss77}. We will often use it without explicit mentioning.
\begin{lem}[Radial Sobolev embedding] 
Suppose $d\ge 2$ and $h:\mathbb R^d \to \mathbb R$ is radial. If $h\in C_c^{\infty}(\mathbb R^n)$,
then for some constant $C_d>0$ depending only on the dimension $d$, we have
\begin{align*}
r^{\frac {d-1} 2} |h(r) | \le C_d \| h \|_{H^1(\mathbb R^d)}, \quad \forall\, r>0,
\end{align*}
\end{lem}
\begin{proof}
Use the identity $h(r)^2 = -2 \int_r^{\infty} h(\rho) \partial_{\rho} h(\rho) d\rho$ and
observe $r^{d-1} \le \rho^{d-1}$. 
\end{proof}

In the rest of this section, we shall show that at $t=0$, under the assumption
that $(g_0, g_1) \in H^4_{\operatorname{rad}}(\mathbb R^5)
\times H^4_{\operatorname{rad}}(\mathbb R^5)$, we have
\begin{align*}
\sum_{j=0}^3\|\partial_t^{j} \Phi(t=0) \|_{H^1(\mathbb R^5)} 
+ \| \partial_t^4 \Phi (t=0 ) \|_{L_x^2(\mathbb R^5)} <\infty.
\end{align*}
These were used in Section $4$ and Section $5$.

We now give the details. We will proceed in 8 steps.

Recall that 
\begin{align*}
\Phi(r,t) =\int_0^{g(r,t)} (1+ 
\frac{2 \sin^2 (r y + \phi(r) )} {r^2} )^{\frac 12} dy + r^{-3} \phi_{\gtrsim 1}(r),
\end{align*}
where $\phi(r)=N_1 \pi$ for $r\le 1$ and $\phi(r)=0$ for $r\ge 2$.  Recall 
$f(r,t)= \phi(r) +rg(r,t)$ and
\begin{align*}
E(t) & = \frac 1 2 \int_{0}^\infty  (1+ \frac{2\sin^2 f} {r^2} )
\Bigl( (\partial_t f)^2 + (\partial_r f)^2 \Bigr) r^2 dr
 + \int_{0}^\infty \frac{\sin^2 f}{r^2}
\Bigl( 1+ \frac{\sin^2 f}{2r^2} \Bigr) r^2 dr.
\end{align*}

Assume $(g, \partial_t g)_{t=0} =(g_0, g_1)\in H^4_{\operatorname{rad}}(\mathbb R^5)
\times H^3_{\operatorname{rad}} (\mathbb R^5)$.  By Hardy we have
$\| \frac {g_0} r \|_{L_x^2(\mathbb R^5)} <\infty$. By Sobolev embedding we have
$\| g_0\|_{\infty} +\|\nabla g_0\|_{\infty}+ \| g_1\|_{\infty}<\infty$. 

1. $\|( \partial_r f )(t=0)\|_{L^2(\mathbb R^3)} + \| (\partial_t f) (t=0)\|_{L_x^2(\mathbb R^3)}<\infty$. 

Since $\partial_r f \bigr|_{t=0}= \phi^{\prime}(r) + r \partial_r g_0 +g_0$, we have
\begin{align*}
\| \partial_r f (t=0) \|_{L^2_x(\mathbb R^3)}
& \lesssim 1 + \| \partial_r g_0 \|_{L^2_x(\mathbb R^5)} + \| \frac {g_0} r \|_{L^2_x(\mathbb R^5)} \\
& \lesssim 1 + \| g_0 \|_{H^1(\mathbb R^5)} <\infty.
\end{align*}
The estimate for $\| \partial_t f\|_{L^2(\mathbb R^3)}$ is similar and therefore omitted.

2.  $E(0)<\infty$. 

We first consider the term $\int_{0}^\infty \frac{\sin^2 f}{r^2}
( 1+ \frac{\sin^2 f}{2r^2} ) r^2 dr$.  Clearly the contribution of the part $r\sim 1$ is bounded.
Therefore we only need to consider $0<r\le 1$ and $r\ge 2$.  Then
(below for simplicity of notation $f$, $g$ and $\partial_t f$ will all be evaluated at $t=0$)
\begin{align}
\int_{0}^{\infty} \frac{\sin^2 f}{r^2}
( 1+ \frac{\sin^2 f}{2r^2} ) r^2 dr 
&\lesssim\; 1 + \int_0^{\infty} g^2 r^2 dr + \int_0^{\infty} g^4 r^2 dr \notag \\
& \lesssim\; 1+ \| \frac g r \|_{L^2_x(\mathbb R^5)}^2 + \|
\frac {g^2} {r} \|_{L_x^2(\mathbb R^5)}^2 \notag \\
& \lesssim \; 1+ \| \nabla g \|_{L_x^2(\mathbb R^5)}^2 
+ \| \nabla (g^2) \|^2_{L_x^2(\mathbb R^5)}  \label{Se140a} \\
& \lesssim \; 1+ \|g\|_{H^4(\mathbb R^5)}^4 <\infty. \notag 
\end{align}
Next for the first term in $E$, we first deal with $r\ge 1$:
\begin{align*}
\int_{1}^{\infty}  (1+ \frac{2\sin^2 f} {r^2} )
( (\partial_t f)^2 + (\partial_r f)^2 ) r^2 dr
\lesssim \| \partial_t f \|_{L^2(\mathbb R^3)}^2 + \| \partial_r f\|_{L^2(\mathbb R^3)}^2
<\infty.
\end{align*}
For the part $0<r\le 1$ we have
\begin{align*}
\int_{0}^1 & (1+ \frac{2\sin^2 f} {r^2} )
( (\partial_t f)^2 + (\partial_r f)^2 ) r^2 dr \notag \\
& \lesssim \int_0^1 (1+g^2) ( r^2 (\partial_t g)^2 + r^2 (\partial_r g)^2 +g^2) r^2 dr 
\notag \\
& \lesssim  1+\int_0^1 g^2(1+g^2) r^2 dr+ 
(1+\|g\|_{\infty}^2) (\| \partial_t g\|_{L^2_x(\mathbb R^5)}^2
+ \| \partial_r g\|_{L_x^2(\mathbb R^5)}^2)<\infty.
\end{align*}

3. $\| \nabla \Phi(t=0) \|_{L_x^2(\mathbb R^5)} <\infty$.

First observe that\footnote{Recall that $\phi_{\gtrsim 1}$ can vary from line to line.} 
$|\Phi(r,0)|\lesssim |g(r,0)|+|g(r,0)|^2+ r^{-3} |\phi_{\gtrsim 1}(r)| $, and
\begin{align*}
\| \frac {\Phi} r \|_{L_x^2(\mathbb R^5)}
\lesssim 1 + \| \frac {g} r \|_{L_x^2(\mathbb R^5)} + \| 
\frac {g^2 } r \|_{L_x^2(\mathbb R^5)} <\infty,
\end{align*}
where we have used \eqref{Se140a}.

Next by a simple change of variable $ry\to y$, we rewrite $\Phi$ as
\begin{align*}
\Phi(r,t) = r^{-2} \int_0^{rg(r,t)} (r^2+2\sin^2(y +\phi(r) ) )^{\frac 12} dy +r^{-3}
\phi_{\gtrsim 1}(r).
\end{align*}
Then
\begin{align*}
\partial_r \Phi &= - \frac 2 r \Phi +r^{-3}
\phi_{\gtrsim 1}(r) 
+r^{-2} \partial_r (rg) (r^2 +2\sin^2(rg +\phi(r) ) )^{\frac 12} \notag \\
& \quad r^{-2} 
\int_0^{rg} (r^2+2\sin^2(y+\phi) )^{-\frac 12}
(r +\sin(2y+2\phi) \phi^{\prime}(r) ) dy.
\end{align*}
If $r\ge \frac 12$, then it follows that
\begin{align*}
|\partial_r \Phi| \lesssim r^{-1} |\Phi| +r^{-3} |\phi_{\gtrsim 1} (r)|
+|\partial_r g |+ r^{-1} |g|.
\end{align*}
If $0<r<\frac 12$, then 
\begin{align*}
|\partial_r \Phi| & \lesssim r^{-1} |\Phi| +r^{-2}| \partial_r (rg)|
\cdot (r +r |g|) + r^{-1} |g| \notag \\
& \lesssim r^{-1} |\Phi| +r^{-1} |g|+ r^{-1} g^2 + |g| |\partial_r g| + |\partial_rg |.
\end{align*}
Thus we have $\| \partial_r \Phi \|_{L_x^2(\mathbb R^5)} <\infty.$

4. $\| \partial_t \Phi (t=0) \|_{H^1(\mathbb R^5)} <\infty$.

First observe that
\begin{align} \label{app_e40-1a}
\partial_t \Phi= \frac 1r \partial_t f (1+\frac {2\sin^2 f } {r^2} )^{\frac 12}.
\end{align}
Thus
\begin{align} \label{app_e40a}
\partial_t \Phi \bigr|_{t=0} = g_1\cdot (1+\frac {2\sin^2 (rg_0 +\phi(r) )} {r^2} )^{\frac 12}.
\end{align}
Since $g_1 \in H^3(\mathbb R^5)$ and $g_0 \in H^4(\mathbb R^5)$, we clearly have
$\| \frac {2\sin^2 (rg_0+\phi(r) )} {r^2}\|_{\infty} \lesssim 1$, and
\begin{align*}
\| \partial_t \Phi(t=0) \|_{L_x^2(\mathbb R^5)} <\infty.
\end{align*}
To bound the $\dot H^1$-norm, we consider first the regime $r\ge \frac 12$. Denote
\begin{align*}
B_0= 1+ \frac{ 2 \sin^2 (rg_0 + \phi(r) ) }{r^2}.
\end{align*}
Clearly for $r\ge \frac 12$, using $\|g_0\|_{\infty} +\| \nabla g_0\|_{\infty}\lesssim 1$,
we have
\begin{align}
|\partial_r B_0| \lesssim 1+ r^{-2} (| \partial_r (r g_0 )| + |\phi^{\prime}(r)|) 
\lesssim 1. \label{app_e31a}
\end{align}
Next for $r<\frac 12$, we have
\begin{align*}
B_0= 1+\frac {2 \sin^2 (rg_0 )} {(rg_0)^2} g_0^2 = 1+ \tilde G(rg_0) g_0^2,
\end{align*}
where $\tilde G( z) = 2 z^{-2} \sin^2 (z)$ has bounded derives of all orders. It follows
that
\begin{align}
|\partial_r B_0 | &\lesssim  |\partial_r (rg_0)| g_0^2 + |\partial_r g_0 | \cdot |g_0| \notag \\
& \lesssim r |\partial_r g_0| g_0^2 + |g_0|^3 + |\partial_r g_0| \cdot |g_0| 
 \lesssim  1. \label{app_e31b}
\end{align}
where we used again the fact  $\|g_0\|_{\infty}
+\|\nabla g_0\|_{\infty} \lesssim
1$.  It follows that
\begin{align*}
\| \partial_r ( \partial_t \Phi) \|_{L_x^2(\mathbb R^5)}
& \lesssim \| \partial_r g_1\|_{L_x^2(\mathbb R^5)}
+\|g_1 B_0^{-\frac 12} \partial_r B_0 \|_{L_x^2(\mathbb R^5)} \lesssim 1.
\end{align*}

5. Denote $A= 1+\frac {2\sin^2 \tilde f_0 }{r^2}$ where $\tilde f_0= \phi(r)+rg_0$. Then
\begin{align} \label{app_e32a}
\| A\|_{\infty} + \| \partial_r A \|_{\infty} \lesssim 1, \quad
\| \Delta_5 A \|_{L_x^2(\mathbb R^5)} 
+\| \Delta_5 A \|_{L_x^{10}(\mathbb R^5)} \lesssim 1.
\end{align}

 For $r\ge \frac 12$, we use \eqref{app_e31a}. For $r<\frac 12$ we use \eqref{app_e31b}.
Thus the first inequality is obvious. The second inequality follows from a similar computation.
One should note that by Sobolev embedding,
\begin{align*}
\| \Delta_5 g_0\|_{L_x^{10}(\mathbb R^5)} \lesssim \| g_0 \|_{H^4(\mathbb R^5)} <\infty.
\end{align*}

6.  $\| (\partial_{tt}\Phi) (t=0) \|_{H^1(\mathbb R^5)} <\infty$.

First observe that
\begin{align}
\partial_{tt} \Phi
= \frac 1 r \partial_{tt} f
(1+\frac {2\sin^2 f} {r^2} )^{\frac 12}
+ \frac 1 {r^3} (\partial_t f)^2 (1+ \frac {2\sin^2 f}{r^2} )^{-\frac 12}
\sin (2f). \label{app_e30a}
\end{align}

We first consider the second term on the RHS. Since $\partial_t f \bigr|_{t=0}= r g_1$, we have
\begin{align*}
\frac 1 {r^3} &(\partial_t f)^2 (1+ \frac {2\sin^2 f}{r^2} )^{-\frac 12}
\sin (2f) \bigr|_{t=0} \notag \\
=& r^{-1} g_1^2 (1+ \frac {2\sin^2( \phi+rg_0)} {r^2} )^{-\frac 12}
\sin(2\phi+2rg_0).
\end{align*}
For any $r>0$, it is not difficult to check that  (note below that for $r\le \frac 12$, 
one can write $ r^{-1}  (1+ \frac {2\sin^2( \phi+rg_0)} {r^2} )^{-\frac 12}
\sin(2\phi+2rg_0) = \tilde F(rg_0) g_0$, where $\tilde F$ has bounded derivatives)
\begin{align*}
& | \Bigl(
 r^{-1}  (1+ \frac {2\sin^2( \phi+rg_0)} {r^2} )^{-\frac 12}
\sin(2\phi+2rg_0) \Bigr) | \lesssim 1+|g_0(r)|, \\
&
|\partial_r \Bigl(
 r^{-1}  (1+ \frac {2\sin^2( \phi+rg_0)} {r^2} )^{-\frac 12}
\sin(2\phi+2rg_0) \Bigr) | \notag \\
& \qquad\qquad\qquad\lesssim 1+|g_0(r)|
+|g_0| |\partial_r(rg_0(r))|+|\partial_r g_0(r)|.
\end{align*}
It follows easily that
\begin{align*}
 \| g_1^2  r^{-1} (1+ \frac {2\sin^2( \phi+rg_0)} {r^2} )^{-\frac 12}
\sin(2\phi+2rg_0) \|_{H^1(\mathbb R^5)} \lesssim 1.
\end{align*}
It remains for us to check the first term on the RHS in \eqref{app_e30a}. 
Note that by \eqref{app_e32a} the factor $(1+\frac {2\sin^2 f} {r^2} )^{\frac 12}
$ is harmless for us when estimating the $H^1$-norm.  Therefore we only need
to focus on the estimate of $\| \frac 1 r \partial_{tt} f \|_{H^1(\mathbb R^5)}$.
Observe
that
\begin{align} \label{app_ge7-1a}
\frac 1 r \partial_{tt} f =  \partial_{tt} g= \square_5 g + \Delta_5 g.
\end{align}
Clearly $\| \Delta_5 g_0\|_{H^1(\mathbb R^5)} \lesssim 1$. For 
$\square_5 g$ we use \eqref{ge7}:
\begin{align}
\square_5 g & = \frac {\phi_{<1}} {1+\tilde F_0(rg) g^2} \Bigl( \tilde F_1(rg) g^3 + \tilde F_2(rg) g^5 \notag\\
&\qquad- \tilde F_3(rg) \cdot g \cdot \bigl( (\partial_t g)^2 - (\partial_r g)^2 \bigr) \notag \\
& \qquad + \tilde F_4(rg) \cdot g^4 \cdot r \partial_r g \Bigr) \notag \\
& \qquad + \phi_{>1} \cdot \frac 2 {r^2} g + \frac 1 r \Delta_3 \phi \notag \\
& \qquad + \frac 1 r \phi_{>1} \cdot N(r, \phi+rg, (\phi+rg)^\prime), \label{app_ge7}
\end{align}
where $\tilde F_i (x) =F_i(x^2)$ and $F_i$ has bounded derivatives of all orders. Clearly
by using radial Sobolev embedding and $\|g_0\|_{H^4} \lesssim 1$, we have
\begin{align*}
\| \partial_r ( F_i (r^2 g_0^2) ) \|_{\infty}
\lesssim \| \partial_r (r^2 g_0^2) \|_{\infty} \lesssim 1.
\end{align*}
By a tedious calculation, it is not difficult to check then that the RHS of
\eqref{app_ge7} all have bounded $H^1(\mathbb R^5)$-norm. Thus 
$\| \square_5 g_0\|_{H^1(\mathbb R^5)} \lesssim 1$ and consequently
 $\| (\partial_{tt}\Phi) (t=0) \|_{H^1(\mathbb R^5)} <\infty$.

7.  $\| (\partial_{ttt}\Phi) (t=0) \|_{H^1(\mathbb R^5)} <\infty$.

Here we use \eqref{ge36}:
\begin{align*}
\partial_{ttt} \Phi \bigr|_{t=0} &= \Delta_5 \partial_t \Phi\bigr|_{t=0}- \frac 3 2 \partial_t \Phi
\bigr|_{t=0}
+ \frac 12 \partial_t g  (3 B^{\frac 32}\bigr|_{t=0} + B^{-\frac 12}\bigr|_{t=0} - B^{-\frac 32}
\bigr|_{t=0} ), \notag \\
&=\Delta_5 \partial_t \Phi\bigr|_{t=0}- \frac 3 2 \partial_t \Phi
\bigr|_{t=0}
+ \frac 12 g_1 (3 A^{\frac 32} +A^{-\frac 12} -A^{-\frac 32}),
\end{align*}
where $A$ is the same as in \eqref{app_e32a}.  By \eqref{app_e32a}, the last
term above clearly is bounded in $H^1(\mathbb R^5)$.  Also in Step 4 we have shown
$\| \partial_t \Phi \bigr|_{t=0} \|_{H^1(\mathbb R^5)} <\infty$.  By 
\eqref{app_e40a}, we have
\begin{align*}
\Delta_5 ( \partial_t \Phi\bigr|_{t=0})
=\Delta_5(g_1 A^{\frac 12}) 
=\Delta_5 (g_1 ) A^{\frac 12}+ 2 \partial_r g_1 \cdot \partial_r (A^{\frac 12})
+g_1 \Delta_5 (A^{\frac 12} ).
\end{align*}
Clearly by \eqref{app_e32a}, it follows that $\| \Delta_5 ( \partial_t \Phi\bigr|_{t=0})
\|_{L_x^2(\mathbb R^5)} \lesssim 1$.

8.  $\| (\partial_{tttt}\Phi) (t=0) \|_{L_x^2(\mathbb R^5)} <\infty$.
Here we use again \eqref{ge36}:
\begin{align*}
\partial_{tttt} \Phi \bigr|_{t=0} 
&=\Delta_5 \partial_{tt} \Phi\bigr|_{t=0}- \frac 3 2 \partial_{tt} \Phi
\bigr|_{t=0}
+ \frac 12 \partial_{tt} g \bigr|_{t=0} (3 A^{\frac 32} +A^{-\frac 12} -A^{-\frac 32}) \notag \\
&\qquad + \frac 12 g_1 (\frac 9 2 A^{\frac 12}
-\frac 12 A^{-\frac 32} +\frac 32 A^{-\frac 52} )
\cdot \frac { \sin (2rg_0 +2\phi) }{r^2} \cdot (2r g_1). 
\end{align*}
By the calculation in Step 6, we have $\|\partial_{tt} g \bigr|_{t=0} \|_{L_x^2(\mathbb R^5)}
\lesssim 1$. The last three terms above clearly is $L_x^2(\mathbb R^5)$-bounded.

We now only to estimate $ \| \Delta_5 \partial_{tt} \Phi \bigr|_{t=0} \|_{L_x^2(\mathbb R^5)}$.
By \eqref{app_e40-1a}, we have
\begin{align*}
\partial_{tt} \Phi \bigr|_{t=0}
= \frac 1 r \partial_{tt} f \bigr|_{t=0}A^{\frac 12}
+r^{-1} g_1^2 A^{-\frac 12} \sin 2 \tilde f_0,
\end{align*}
where $\tilde f_0=\phi +rg_0$.  Clearly by \eqref{app_e32a} and
$\|g_0\|_{H^4} +\|g_1\|_{H^3}\lesssim 1$, we have
\begin{align*}
\| \Delta_5 (   g_1^2 A^{-\frac 12} \frac {\sin 2 \tilde f_0} r ) \|_{L_x^2(\mathbb R^5)}
\lesssim 1. 
\end{align*}
By \eqref{app_ge7-1a}, we have
\begin{align*}
\frac 1 r \partial_{tt} f \bigr|_{t=0} A^{\frac 12}
=  A^{\frac 12} (\square_5 g + \Delta_5 g)\bigr|_{t=0}.
\end{align*}
By \eqref{app_e32a}, we have
\begin{align*}
\| \Delta_5 ( A^{\frac 12} \Delta_5 g_0) \|_{L_x^2(\mathbb R^5)}
\lesssim \| g_0\|_{H^4} + \| \Delta_5 (A^{\frac 12} )\|_{L_x^4(\mathbb R^5)}
\| \Delta_5 g_0\|_{L_x^4(\mathbb R^5)} <\infty.
\end{align*}
Similarly we have (below we used the simple inequality
$\|h\|_{L_x^4(\mathbb R^5)} \lesssim \| h\|_{L_x^2(\mathbb R^5)}
+ \| \Delta_5 h \|_{L_x^2(\mathbb R^5)}$)
\begin{align*}
\| \Delta_5 ( A^{\frac 12} \square_5 g_0 ) \|_{L_x^2(\mathbb R^5)}
&\lesssim \| \square_5 g_0 \|_{L_x^4(\mathbb R^5)}
+ \| \nabla \square_5 g_0\|_{L_x^2(\mathbb R^5)}
+ \| \Delta_5 \square_5 g_0\|_{L_x^2(\mathbb R^5)} \notag \\
& \lesssim  \| \square_5 g_0\|_{L_x^2(\mathbb R^5)}
+ \|\Delta_5 \square_5 g_0\|_{L_x^2(\mathbb R^5)}.
\end{align*}
In Step 6 (see the estimates near \eqref{app_ge7-1a}, we have estimated
$\| \square_5 g_0 \|_{H^1(\mathbb R^5)}$. Thus we only need to deal with
the term $\| \Delta_5 \square_5 g_0\|_{L_x^2(\mathbb R^5)}$. 
By using \eqref{app_ge7} and a tedious computation, it is not difficult
to check that the RHS of \eqref{app_ge7} has finite $H^2(\mathbb R^5)$-norm.
This then completes the estimate of $\| \partial_{tttt} \Phi\bigr|_{t=0}\|_{L_x^2(\mathbb R^5)}$.

\section{The continuity argument} \label{S:cont}
In this appendix we give more details of the continuity argument in  the 
derivation of \eqref{he12}. 
Recall the main equation:
\begin{align}
\square_5 ( \partial_t \Phi)
=\Bigl( \frac 32 + \frac 12 A^{-2} \Bigr) (A-1) \partial_t \Phi,
\end{align}
and 
\begin{align} \label{app_e5.8}
|A-1| \lesssim \min\{ r^{-\frac 32}, r^{-4} \}.
\end{align}

Now denote $u=\partial_t \Phi$.  Our goal is to show that on the interval $[0,T_1]$ ($T_1<T$
can be arbitrarily close to $T$), we have
\begin{align} \label{app_eNew1}
\| u \|_{L_t^3 L_x^3 ([0,T_1]) }+ \|u \|_{C_t^0 \dot H_x^{\frac 12} ([0,T_1])} \lesssim 1,
\end{align}
where the implied constant is independent of $T_1$.

To this end we decompose $[0,T_1] = \bigcup_{i=0}^{N_0} [t_i, t_{i+1}]$, where
$t_0=0$, $t_{N_0+1} =T_1$, and $t_{i+1}-t_i$ will be taken sufficiently small.
The needed smallness will become clear in the argument below.

First observe that by using the estimates in Section 4, we have
\begin{align} \label{app_e5.91}
\| P_{<1} u 
\|_{L_t^3 L_x^3([0,T_1])} \lesssim \| u \|_{L_t^{\infty}L_x^2 ([0,T_1])} 
=\| \partial_t \Phi \|_{L_t^{\infty} L_x^2 ([0,T_1])}
\lesssim 1. 
\end{align}

By Strichartz (Lemma \ref{lem_S1}) and \eqref{app_e5.8}, we have on
each $[t_i, t_{i+1}]$, 
\begin{align}
&\left \| P_{\ge 1} u \right\|_{ L_t^3 L_x^3 ( [t_i, t_{i+1} ])}
+ \left\| P_{\ge 1} u \right\|_{ C_t^0\dot H^{\frac 12}_x([t_i,t_{i+1}])}
+\left \| P_{\ge 1} \partial_t u \right \|_{C_t^0 \dot  H_x^{-\frac 12} ([t_i,t_{i+1}])}
\\
 &
\lesssim \left \| P_{\ge 1} u(t_i ) \right\|_{\dot H_x^{\frac 12} }
 + \left\| P_{\ge 1} \partial_{t} u(t_i)
\right \|_{\dot H_x^{-\frac 12} } 
 + \left\| (A-1) u \right \|_{L_t^{\frac 32} L_x^{\frac 32} ([t_i, t_{i+1}])} \notag \\
& \lesssim \left \| P_{\ge 1} u(t_i ) \right\|_{\dot H_x^{\frac 12} }
 + \left\| P_{\ge 1} \partial_{t} u(t_i)
\right \|_{\dot H_x^{-\frac 12} } 
  + \| (A-1) \|_{L_t^3L_x^3([t_i,t_{i+1}])} \cdot
\| u \|_{L_t^3 L_x^3([t_i, t_{i+1}])} \notag \\
& \lesssim \left \| P_{\ge 1} u(t_i ) \right\|_{\dot H_x^{\frac 12} }
 + \left\| P_{\ge 1} \partial_{t} u(t_i)
\right \|_{\dot H_x^{-\frac 12} } 
+ (t_{i+1}-t_i)^{\frac 13} \| u \|_{L_t^3 L_x^3 ([t_i, t_{i+1}])}.
\end{align}
Clearly if $(t_{i+1}-t_i)$ is sufficiently small, we have (using \eqref{app_e5.91})
\begin{align*}
&\left \| P_{\ge 1} u \right\|_{ L_t^3 L_x^3 ( [t_i, t_{i+1} ])}
+ \left\| P_{\ge 1} u \right\|_{ C_t^0\dot H^{\frac 12}_x([t_i,t_{i+1}])}
+\left \| P_{\ge 1} \partial_t u \right \|_{C_t^0 \dot  H_x^{-\frac 12} ([t_i,t_{i+1}])}
\notag \\
\lesssim&\;  \left \| P_{\ge 1} u(t_i ) \right\|_{\dot H_x^{\frac 12} }
 + \left\| P_{\ge 1} \partial_{t} u(t_i)
\right \|_{\dot H_x^{-\frac 12} } +1.
\end{align*}

Clearly by using the above estimate and iterating from $i=0$ to $i=N_0$ (for the base step
$i=0$, one can use the estimates in Appendix A to obtain
$ \left \| P_{\ge 1} u(t=0) \right\|_{\dot H_x^{\frac 12} }
 + \left\| P_{\ge 1} \partial_{t} u(t=0)
\right \|_{\dot H_x^{-\frac 12} }  \lesssim 1$), we
can obtain the estimate \eqref{app_eNew1}.

\section{Additional estimates} \label{S:add1}
This appendix is for the estimates in \eqref{he33} and \eqref{he38}.

For $r\le \frac 12$,  we have $f= N_1\pi+ rg$. By using \eqref{ge62}, we have
\begin{align} \label{C1e1}
&\partial_r \Phi(r,t) + \frac 2 r \Phi (r,t) \notag \\
 =&  \left( 1+ \frac{2\sin^2 f} {r^2} \right)^{\frac 12}
\cdot (\partial_r g + \frac g r)
 + \frac 1 r \int_0^{g(r,t)} \left( 1+ \frac{2\sin^2(ry)} {r^2} \right)^{-\frac 12} dy. 
\end{align}
By \eqref{he21} and \eqref{he22}, we have for $r\le \frac 12$,
\begin{align} \label{C1e3}
|\Phi(r) | \lesssim r^{-\frac 12}, \quad |g(r) |\lesssim r^{-\frac 14}.
\end{align}
Thus  for $r\le \frac 12$,  plugging \eqref{C1e3} into \eqref{C1e1}, we obtain
\begin{align*}
|\partial_r g| \lesssim |\partial_r \Phi| + r^{-\frac 32}.
\end{align*}
This estimate is used in \eqref{he33}.

Next we turn to \eqref{he38}.  By \eqref{he35} and \eqref{he36}, we have
\begin{align*}
|\Phi(r)|\lesssim \langle r \rangle^{-2}, \quad |g(r)|\lesssim \langle r\rangle^{-2}.
\end{align*}
By \eqref{C1e1}, we then have
\begin{align*}
| \partial_r g| \lesssim |\partial_r \Phi|+ r^{-1} \langle r\rangle^{-2}.
\end{align*}
Thus $\| \partial_r g \|_{L_t^{\infty} L_x^4 } \lesssim 1$.

\end{document}